\journal{Journal of Computational Algebra}
\newtheorem{theorem}{Theorem}[section]
\newtheorem{corollary}[theorem]{Corollary}
\newtheorem{proposition}[theorem]{Proposition}
\theoremstyle{definition}
\newtheorem{definition}[theorem]{Definition}
\newtheorem{example}[theorem]{Example}
\newtheorem{remark}[theorem]{Remark}
\newcommand{\PSLR}{\mathop{\rm PSL}(2,\mathbb{R})}
\newcommand{\SLR}{\mathop{\rm SL}(2,\mathbb{R})}
\newcommand{\tr}{\mathop{\rm tr}\nolimits}
\newcommand{\ord}{\mathop{\rm ord}\nolimits}
\newcommand{\simN}{\underset{N}{\sim}}
\newcommand{\ZZ}{{\mathbb{Z}}}
\let\phi\varphi
\begin{document}

\begin{frontmatter}

\title{Trace Minimization and Roots in ${\rm PSL}(2,\mathbb{R})$}


\author[1]{Martin Kreuzer\corref{cor1}}
\ead{Martin.Kreuzer@uni-passau.de}

\author[2]{Anja Moldenhauer}
\ead{A.Moldenhauer@gmx.net}

\author[3]{Gerhard Rosenberger}%
\ead{Gerhard.Rosenberger@icloud.com}

\affiliation[1]{organization={Faculty of Informatics and Mathematics},
            addressline={University of Passau}, 
            city={94030 Passau},
            country={Germany}}

\affiliation[2]{
            addressline={Hammer Steindamm 111}, 
            city={20535 Hamburg}, 
            country ={Germany}}

\affiliation[3]{organisation={University of Hamburg},
            addressline={Bundesstr. 55},
            city={20146 Hamburg},
            country={Germany}}

\cortext[cor1]{Corresponding author}

\begin{abstract}
Suppose that $A,B \in {\rm PSL}(2,\mathbb{R})$ generate a non-elementary Fuchsian group.
Let $m,n\in\mathbb{N}_+$, and let $R,S\in {\rm PSL}(2,\mathbb{R})$ such that
$R^m=A$ and $S^n=B$. We present explicit algorithms to check whether $\langle R,S\rangle$
is a Fuchsian group. These algorithms rely only on the knowledge of the traces
$\tr(A)$, $\tr(B)$, and $\tr(AB)$, which we assume to be given as algebraic numbers.
The main tools are the classic Trace Minimization Algorithm, as introduced in 1972 
by the third author, a new Extended Trace Minimization Algorithm, and a
Rational Angle Recovery Algorithm which checks whether a given number $x$
is if the form $x = 2 \,\cos(p\,\pi/q)$.

The questions when roots of the generators of a free Fuchsian group of rank~2
generate again a free Fuchsian group of rank~2, and an extension to
positive rational exponents $m,n$ are treated, as well.
\end{abstract}



\begin{keyword}
projective special linear group \sep Fuchsian group \sep discrete group \sep 
free group \sep trace minimization
\MSC[2020] 20G20 \sep 20-08 \sep 20E05 \sep 20G07
\end{keyword}

\end{frontmatter}

%
%

\section{Introduction}
\label{sec1}

The special linear group $\PSLR$ can be identified with the group of orientation-preserving
isometries of the hyperbolic plane. Since discrete subgroups of $\PSLR$ correspond to
tesselations of the hyperbolic plane through their fundamental domains, the classification
of these discrete subgroups has been an important topic in group theory. 
They are sometimes called \textit{Fuchsian groups}. 
In this paper we consider only non-elementary Fuchsian groups, i.e., we
exclude cyclic groups and groups isomorphic to the infinite dihedral group. 

A particularly successful story is the classification of 2-generated Fuchsian groups.
Starting with the results of A.W.\ Knapp~\cite{Kna}, who considered the cases when neither
of the two generators is hyperbolic, this classification was completed in the 1970s
through the works of N.\ Purzitsky and the third author \cite{Pur1,Pur2,PR,Ros0,Ros1},
and later laid out comprehensively in~\cite{Ros2,FR}. 

For an algorithmic treatment of this classification, the groundwork was done in the 
third author's doctoral thesis~\cite{Ros0}
where he introduced the trace minimization technique together with a finiteness proof.
This technique was later used in numerous papers (see e.g.~\cite{FR,KIR,Ros3,Ros4}), 
but it was usually referred to in an informal way, except for a version to test freeness in~\cite{EKL}.
One of the results of this paper is that we present an explicit version
of this approach which can be implemented in any compute algebra system that supports algebraic numbers.

A different path to test whether a 2-generated subgroup of $\PSLR$ is discrete is based
on geometric arguments and calculations. It was initiated by J.G.\ Matelski~\cite{Mat}, expanded and
improved by J.\ Gilman and B.\ Maskit in~\cite{GM}, and finally completed by J.\ Gilman in~\cite{G3}.
The computations underlying this approach are mainly geometrric and have as input the knowledge of the entries 
of the matrices $A,B$ in the form of algebraic numbers (see~\cite[Theorem 14.5.1]{G3}).

It follows from results of Majeed~\cite{Maj}, Rosenberger~\cite{Ros3}, as well as Fine and Rosenberger~\cite{FR}
that, given a 2-generated subgroup $G=\langle A,B\rangle$ of~$\PSLR$ which is 
non-elementary, one can pass from~$(A,B)$ to a Nielsen equivalent
pair of generators $(U,V)$ of~$G$ such that the subgroup $\langle U^m, V^n\rangle$ is a free
Fuchsian group of rank~2 for sufficiently large $m,n\gg 0$. 
(For a detailed proof, see~\cite[Thm.~1.7.50 and Cor.~1.7.52]{FMRSW}.)

In this note we are interested in the reverse question: under which conditions do roots
of generators of 2-generated Fuchsian groups generate a Fuchsian group again? 
Recall that a 2-generated Fuchsian group is either free of rank~2, 
or it contains elliptic elements of finite order (cf.\ \cite{Ros1}). 
In the following we assume that $\overline{A}, \overline{B}\in \PSLR$ are elements which 
generate a non-elementary Fuchsian group 
$G = \langle \overline{A}, \overline{B}\rangle$, and that
$\overline{R}, \overline{S} \in \PSLR$ are such that $\overline{R}^{\,m} = \overline{A}$ and 
$\overline{S}^{\,n} = \overline{B}$ for some $m,n\in \mathbb{N}_+$.
To simplify the presentation, we shall always choose representatives $A,B,R,S \in \SLR$
whose traces are non-negative and work with these matrices. 
In this setting we address the following two questions.
\begin{enumerate}
\item[(1)] Given the trace triple $(\tr(A), \tr(B), \tr(AB))$, can we decide
whether the larger group $\langle R,S\rangle$ is discrete?

\item[(2)] Given the trace triple $(\tr(A), \tr(B), \tr(AB))$ of a free Fuchsian group of rank~2, 
can we decide whether the larger group $\langle R,S\rangle$ is a free Fuchsian
group of rank~2?
\end{enumerate}

In several papers, special cases of these questions have been treated previously
(see~\cite{Bea2,G1,G3,Par}), but a general answer seems to be unknown.
The solutions given here are partially algorithmic, i.e., sometimes one has to apply an algorithm
to decide the question. The input of these algorithms is the
trace triple $(\tr(A),\tr(B),\tr(AB))$ which we assume to be given as algebraic numbers
via their minimal polynomials and isolating intervals. Note that, if some of these traces are
transcendental numbers, the classification theorems below remain valid, but an actual
implementation may be impossible.

The {\it commutator trace} $\tr([A,B])$ is computed via the Fricke identity
$$
\tr([A,B]) \;=\; \tr(A)^2 + \tr(B)^2 +\tr(AB)^2 - \tr(A)\tr(B)\tr(AB) - 2.
$$
Notice that it does not depend on the choice of the representatives $A,B$, and
it satisfies $\tr([A,B])\ne 2$, since we assumed the Fuchsian group $\langle A,B\rangle$
to be non-elementary. 

The first step in our algorithms will usually be to compute
the trace triple and the commutator trace of the pair $(R,S)$.
For this we use the \textit{Chebychev S-polynomials} $S_i(x)$ defined by $S_0(x)=0$,
$S_1(x)=1$, and $S_i(x) = x\, S_{i-1}(x) - S_{i-2}(x)$ for $i\ge 2$.
The identity element of $\PSLR$ will be denoted by~$E$.
The following result is not new, but crucial for our approach.

\begin{theorem}[Computing the Trace Triple of $(R,S)$]\label{thm:CompTraceTriple}\, \\
Let $A,B \in \PSLR \setminus \{E\}$, let $x=\tr(A)\ge 0$, let $y=\tr(B)\ge 0$, 
let $z=\tr(AB)$, and let $m,n\in \mathbb{N}_+$.
\begin{enumerate}
\item[(a)] If $x=2$, i.e., if~$A$ is parabolic, then $\tr(R)=2$. Similarly, if $y=2$,
then $\tr(S)=2$.

\item[(b)] If $x>2$, i.e., if $A$ is hyperbolic, then
$$
\tr(R) \;=\; \sqrt[m]{\tfrac{1}{2}\,(x + \sqrt{x^2 - 4})} + \sqrt[m]{\tfrac{1}{2}\, (x - \sqrt{x^2 - 4})}
$$
The analogous formula holds for $\tr(S)$ if $\tr(B)>2$.

\item[(c)] If $0\le x<2$, i.e., if~$A$ is elliptic, then there are~$m$ 
possible values of~$\tr(R)$, namely 
$$
\tr(R) = \sqrt[m]{\tfrac{(-1)^\ell}{2} (x + \sqrt{x^2 - 4})} + \sqrt[m]{\tfrac{(-1)^\ell}{2} (x - \sqrt{x^2 - 4})}
$$
where $\ell \in \{0,\dots,m-1\}$.
The analogous formula holds for $\tr(S)$ if $0\le \tr(B) <2$.

\item[(d)] If $x$ is an algebraic number with minimal polynomial $P(x)$ then the minimal polynomial
of $x'=\tr(R)$ is one of the irreducible factors of the polynomial $P(x'\, S_m(x') - 2\, S_{m-1}(x'))$.  
The analogous statement holds for $\tr(S)$ if~$y$ is an algebraic number.

\item[(e)] Let $x'=\tr(R)$ and $y'=\tr(S)$. If $m>1$ and $n>1$, we have $S_m(x')\ne 0$ 
as well as $S_n(y')\ne 0$, and $\tr(RS)$ is given by
$$
\tr(RS) \;=\; \tfrac{1}{S_m(x') S_n(y')}\, (z + S_{m+1}(x') S_{n-1}(y') + S_{m-1}(x') S_{n+1}(y')).
$$
If $m=1$ and $n>1$ then $\tr(RS)= \frac{1}{S_n(y')}\, (z + x\, S_{n-1}(y')$. The analogous formula holds for
$m>1$ and $n=1$.
\end{enumerate}
\end{theorem}

Thus the trace triple of $(R,S)$ consists of algebraic numbers whose minimal polynomials
and isolating intervals can be computed. Using the Fricke identity, we can then compute
the algebraic number $\tr([R,S])$.

Our answers to the above two questions are based on three main algorithms: the first one is the
classical Trace Minimization Algorithm which was first introduced in 1972 in~\cite{Ros0}. For, an explicit formulation 
see Algorithm~\ref{alg:TraceMin}. The second algorithm is called the Extended Trace Minimization
Algorithm (see Algorithm~\ref{alg:ExtTraceMin}) and comes into play if we have to compute the normalization of elliptic elements.
It uses an auxiliary algorithm, called Rational Angle Recovery Algorithm (see Algorithm~\ref{alg:RatAngle}),
which checks whether the trace~$x$ of an elliptic element of~$\PSLR$ is of the form $x = 2\,\cos(p\pi/q)$
with $q\ge 2$, $1\le p\le q-1$, and $\gcd(p,q)=1$, and recovers the pair $(p,q)$ in this case.
For the explicit formulations of these algorithms we refer the reader to Section~\ref{sec3}.

The answer to the second question above can be found using the Trace Minimization Algorithm.
Geometric conditions for $\langle R,S\rangle$ to be a free Fuchsian group of rank~2 were
given by J.\ Gilman in~\cite{G3}. An algorithmic criterion is given as follows.

\begin{theorem}[Roots Generating Free Fuchsian Groups]\label{thm:free}\, \\
Suppose that we are given the trace triple $(\tr(A),\tr(B),\tr(AB))$ of a
free Fuchsian group $\langle A,B\rangle$ of rank~2. Let $R,S \in \PSLR$ with $R^m=A$ and
$S^n=B$ for some $m,n\in \mathbb{N}_+$ Compute the trace triple $(x,y,z) = (\tr(R),\tr(S),\tr(RS))$ using
Theorem~\ref{thm:CompTraceTriple}. Then the group $\langle R,S\rangle$ is discrete
and free of rank~2 if and only if one of the following two cases occurs.

\medskip
\noindent{\rm\bf Case F1:} $\tr([A,B]) \le -2$ and
$$
S_m(x)^2 \cdot S_n(y)^2 \;\le\; \tfrac{1}{2} - \tfrac{1}{4}\, 
\tr([A,B]).
$$

\medskip
\noindent{\rm\bf Case F2:} $\tr([A,B]) > 2$ and the trace triple $(x',y',z')$
which results from applying the Trace Minimization Algorithm~\ref{alg:TraceMin}
to $(x,y,z)$ satisfies $x'\ge 2$, $y'\ge 2$, and $z'\le -2$.
\end{theorem}

Next we present algorithmic answers to the first question, i.e., to the question when 
roots of the generators of a Fuchsian group generate a Fuchsian group.

Let $A,B\in \PSLR$ be elements which generate a Fuchsian group, 
let $R,S\in \PSLR$ such that $R^m=A$ and $S^n=B$ for some $m,n\in\mathbb{N}_+$. 
Our algorithms depend on the trace triple of $(A,B)$ and the commutator trace $\tr([A,B])$.
The first theorem relates to the case when~$A$ and~$B$ are not elliptic and
$\langle A,B\rangle$ is not free.

\begin{theorem}[The Torsion Case with Non-Elliptic Generators]\label{thm:torsion}\, \\
Suppose that we are given the trace triple $(\tr(A),\tr(B),\tr(AB))$ of a
Fuchsian group $\langle A,B\rangle$ which contains an elliptic element.
Compute the trace triple $(x,y,z) = (\tr(R),\tr(S),\tr(RS))$ using
Theorem~\ref{thm:CompTraceTriple}. 

\medskip
\noindent{\rm\bf Case I:} $-2 < \tr([A,B]) < 2$. (Note that this forces $A,B$ to be hyperbolic elements.)
Then the group $\langle R,S\rangle$ is discrete if and only if $m=n=1$.

\medskip
\noindent{\rm\bf  Case II:} $\tr([A,B])>2$, $\tr(A)\ge 2$, and $\tr(B)\ge 2$. 
Apply the Extended Trace Minimizing Algorithm~\ref{alg:ExtTraceMin} 
to the triple $(x,y,z)$, and let $(x',y',z')$ be the trace triple it returns.
Then the group $\langle R,S\rangle$ is discrete if and only if one of the
following cases occurs:
\begin{enumerate}
\item[(a)] $x'= 2\, \cos(\pi/p)$ for some $p\ge 2$ and $y'\ge 2$, as well as $z'\le -2$.

In this case, $\langle R,S\rangle$ is isomorphic to $\ZZ_p \ast \ZZ$.

\item[(b)] $x'= 2\, \cos(\pi/p)$ for some $p\ge 2$ and
$y' = 2 \, \cos(\pi/q)$ for some $q\ge p$, as well as $z'\le -2$.
In this case, $\langle R,S\rangle$ is isomorphic to $\ZZ_p \ast \ZZ_q$.

\item[(c)] $x' = 2\, \cos(\pi/p)$ for some $p\ge 2$ and $y'= 2\, \cos(\pi/q)$
for some $q\ge p$, as well as $z' = -2 \, \cos(r\pi/s)$ for some $s\ge 2$ and $r\in \{1,\dots,s-1\}$
with $\gcd(r,s)=1$, such that one of the following conditions holds:
\begin{itemize}
\item[(i)] $r=1$

\item[(ii)] $p=q$, $r=2$, and $s\ge 3$ odd.

\item[(iii)] $p=2$, $q\ge 3$ odd, $r=2$, $s=q$.

\item[(iv)] $p=3$, $q\ge 7$, $\gcd(q,3)=1$, $r=3$, $s=q$.

\item[(v)] $p=q=s$, $r=4$, $s\ge 7$ odd.

\item[(vi)] $p=3$, $q=7$, $r=2$, $s=7$
\end{itemize}
In these cases, $\langle R,S \rangle$ is a triangle group.

\end{enumerate}
\end{theorem}

Here the Extended Trace Minimization Algorithm applies to groups $\langle A,B\rangle$
which contain elliptic elements. It extends the usual Trace Minimization Algorithm by
normalization steps for elliptic elements.
 
Now we examine the case in which one or both generators $A,B$ of the 
Fuchsian group $\langle A,B\rangle$ are elliptic. 
If~$A$ is elliptic, there may be several elements $R\in \PSLR$ such that $R^m=A$. 
The analogous statement holds if~$B$ is elliptic.
The decision whether $\langle R,S\rangle$ is discrete or not may depend on which root we choose 
(see Example~\ref{ex:depends}).
Hence the computation of the trace triple of $(R,S)$ requires an additional input which specifies which root
of~$A$ (and possibly of~$B$) we want to consider.

\begin{theorem}[The Torsion Case with an Elliptic Generator]\label{thm:TorsionElliptic}\, \\
Suppose that we are given the trace triple $(\tr(A),\tr(B),\tr(AB))$ of a
Fuchsian group $\langle A,B\rangle$ such that~$A$ is an elliptic element.

\medskip
\noindent{\bf Case III:} Perform the following steps:
\begin{enumerate}
\item[(1)] Choose numbers $k\in\{0,\dots,m-1\}$ (and $\ell\in \{0,\dots,n-1\}$ if~$B$ is elliptic)
which determine~$R$ (and~$S$ if~$B$ is elliptic). 

\item[(2)] Compute the trace triple $(x,y,z)$ of $(R,S)$.

\item[(3)] Apply the Extended Trace Minimizing Algorithm~\ref{alg:ExtTraceMin} 
to $(x,y,z)$, and let $(x',y',z')$ be the trace triple it returns.
\end{enumerate}

Then the group $\langle R,S\rangle$ is discrete if and only if $(x',y',z')$
satisfies one of conditions in Case~II.
\end{theorem}

The remaining situation is when $\langle A,B\rangle$ is a free Fuchsian group.

\begin{theorem}[The Free Case]\label{thm:CaseIV}\, \\
Suppose that we are given the trace triple $(\tr(A),\tr(B),\tr(AB))$ of a free
Fuchsian group $\langle A,B\rangle$ of rank~2.

\medskip
\noindent{\bf Case IV:} Perform the following steps.
\begin{enumerate}
\item[(1)] Compute the trace triple $(x,y,z)$ of~$(R,S)$ and $\tau=\tr([R,S])$.

\item[(2)] Apply the Trace Minimization Algorithm~\ref{alg:TraceMin} to
$(x,y,z)$, and let $(x',y',z')$ be its result. 
\end{enumerate}

Then $\langle R,S\rangle$ is a discrete group if and only if one of the following cases
occurs.
\begin{enumerate}
\item[(a)] Using Theorem~\ref{thm:free}, the group $\langle R,S\rangle$ 
turns out to be a free Fuchsian group of rank~2.

\item[(b)] $-2 < \tau < 2$ and the Rational Angle Recovery Algorithm, applied to~$\vert \tau\vert$, 
yields a pair $(p,q)$ such that one of the following six cases holds:
\begin{itemize}
\item[(i)] $p=1$, $q\ge 2$.

\item[(ii)] $p=2$, $q\ge 3$ odd.

\item[(iii)] $p=6$, $q\ge 7$, $\gcd(q,6)=1$, $x'=y'=z'$.

\item[(iv)] $p=4$, $q\ge 5$ odd, $x'=y'$, $z'=\frac{1}{2}\, (x')^2$.

\item[(v)] $p=3$, $q\ge 4$, $\gcd(q,3)=1$, $x'=y'=z'$.

\item[(vi)] $p=4$, $q=7$, $x'+1 = y' = z'$. 
\end{itemize}

\item[(c)] $\langle R,S\rangle$ is not free, $\tau > 2$, and
the Extended Trace Minimization Algorithm~\ref{alg:ExtTraceMin}, applied to $(x,y,z)$,
yields a triple $(x'',y'',z'')$ which satisfies one of conditions in Case~II.

\end{enumerate}
\end{theorem}

Here $\langle R,S\rangle$ is a triangle group in cases (b.iii) - (b.vi).

\medskip
Let us give a brief outline of the paper.
In Section~\ref{sec2} we recall some basic results about $\PSLR$ and its subgroups. 
In addition, we collect a number of formulas for the traces of elements of $\PSLR$
and for Chebyshev S-polynomials. The main tools used later on are the power formula 
$$
A^n \;=\; S_n(\tr(A))\cdot A - S_{n-1}(\tr(A))\cdot E
$$
and the trace formula for commutators of powers
$$
\tr([A^m,B^n])-2 \;=\; S_m(\tr(A))^2 \cdot S_n(\tr(B))^2 \cdot (\tr([A,B])-2)
$$
for elements $A,B\in \PSLR$. To keep the paper reasonably self-contained and complete,
we include proofs for those formulas which were not readily available in the literature.
Consequently, we get a proof of Theorem~\ref{thm:CompTraceTriple}.
Similar formulas using the related numbers $\beta(A)=\tr(A)^2-4$ and $\gamma(A,B)=\tr([A,B])-2$
appear in the works of F.W.\ Gehring and G.J.\ Martin~\cite{GMa,Mar}. 

In Section~\ref{sec3} we first recall some basics about Nielsen transformations and Nielsen equivalence.
Then we formulate and prove the Trace Minimization Algorithm
for pairs of elements $(A,B)$ of~$\PSLR$ with $\tr([A,B])\ne 2$ (see Alg.~\ref{alg:TraceMin}
and Theorem~\ref{thm:TraceMin}). Although this technique has been used numerous times
in the past (see e.g.~\cite{FR,KIR,Ros2,Ros3}), apparently no general, explicit formulation has been available
until now. Here we fill this gap and also provide expicitly computed examples (see Examples~\ref{ex:NegTau}
and~\ref{ex:PosTau}).

Then we turn our attention to the Extended Trace Minimization Algorithm (see Algorithm~\ref{alg:ExtTraceMin}
and Theorem~\ref{thm:ExtTraceMin}).
It needs several ingredients, most notably the Rational Angle Recovery Algorithm (see Algorithm~\ref{alg:RatAngle}
and Theorem~\ref{thm:RatAngleAlg})
and an explicit way to perform normalizations of elliptic elements of finite order.

At this point we are ready to prove Theorem~\ref{thm:free} in Section~\ref{sec4}. Besides Theorem~\ref{thm:CompTraceTriple},
the main ingredient here is  the classification of 2-generated free Fuchsian groups of rank~2 (see Theorem~\ref{thm:classify}).

The question when groups $\langle R,S\rangle$ containing torsion are discrete is treated in Section~\ref{sec5}.
If the commutator $[A,B]$ is a torsion element, we prove Theorem~\ref{thm:EllipticComm} which says that
$\langle R,S\rangle$ can only be discrete for $m=n=1$. This proof uses the classification of 
2-generated Fuchsian groups with elliptic commutator (see Theorem~\ref{thm:ClassifyElliptic})
to conclude that $\tr([A,B])\le 1$.

The next step is the proof of Theorem~\ref{thm:torsion} which treats the case when $\tr([A,B])>2$ and $A,B$
are non-elliptic, but generate a Fuchsian group with torsion. In this case we need to apply the Extended 
Trace Minimization Algorithm~\ref{alg:ExtTraceMin}. Then we base the remaining steps of the proof on the
general classification of 2-generated Fuchsian groups with elliptic elements (see Theorem~\ref{thm:TheoremC}) 
and on Knapp's classic classification of Fuchsian groups with two elliptic generators (see~Theorem~\ref{thm:Knapp}). 

For the proof of Theorem~\ref{thm:TorsionElliptic}, we use the Extended Trace Minimization Algorithm~\ref{alg:ExtTraceMin}
again. Here one or both of the generators $A,B$ of the given Fuchsian group are assumed to be elliptic.
Thus the roots are in general not unique, and the answer to the question whether $\langle R,S\rangle$
is discrete may depend on the choice of a root (see Example~\ref{ex:depends}).

The last proof in this section relates to the case when $\langle A,B\rangle$ is free. It turns out that
the previous classification results, together with our Algorithms~\ref{alg:TraceMin} and~\ref{alg:ExtTraceMin},
are sufficient to achieve the desired result.

Finally, we present an extension of these algorithms to the case of rational powers 
$R^{\,p/q}=A$ and $S^{\,p'/q'}=B$. This uses a straightforward application of our trace formulas
to compute the trace triple of $(A^q,B^{\,q'})$.

Since the literature on the entire topic is quite vast, we tried to keep this paper as
self-contained as possible and to spell out the classification results we use explicitly 
and in our notation.
All group theoretic definitions and statements used without reference can be found
in~\cite{FMRSW} and~\cite{FR}.

%
%

\section{Preliminaries About $\PSLR$}
\label {sec2}

In this section we collect some basic facts about $\PSLR$, its subgroups and
related objects that we will need later.
Recall that an element~$A$ of the projective special linear group
$\PSLR = \SLR / \{ I_2,-I_2 \}$ can be considered as a fractional linear
transformation $z \mapsto \frac{az+b}{cz+d}$, where $A= \{ \overline{A},
-\overline{A} \}$ is the residue class of $\overline{A} = \binom{a\; b}{c\;d}$
with $ad-bc=1$. Here $I_2 = \binom{1\;0}{0\;1}$ denotes the identity matrix of 
size $2\times 2$.

For an element $A = \{ \overline{A},
-\overline{A} \}$ of~$\PSLR$, the trace of~$A$ is defined only up to sign, but
$\vert \tr(A) \vert$ is well-defined. Notice, however, that for a (multiplicative) commutator
$[A,B] = A\, B\, A^{-1}\, B^{-1}$ of elements $A,B\in \PSLR$, the trace $\tr([A,B])$
is well-defined, as this commutator has a unique representative in~$\SLR$.
By~$E$ we denote the identity element $\{I_2,-I_2\}$ of~$\PSLR$.

\begin{definition}
For $A\in \PSLR \setminus \{E\}$, we say that
\begin{enumerate}
\item[(a)] the element $A$ is \textbf{hyperbolic} if $\vert \tr(A) \vert >2$,

\item[(b)] the element $A$ is \textbf{parabolic} if $\vert \tr(A) \vert =2$, and

\item[(c)] the element $A$ is \textbf{elliptic} if $\vert \tr(A) \vert <2$.

\end{enumerate}
\end{definition}

Furthermore, an element~$A$ of $\PSLR$ has finite order if and only if 
$\vert \tr(A) \vert = 2 \cos(p\,\pi/q)$ for some $p,q\in\mathbb{N}_+$ with $1\le p < q$ and $\gcd(p,q)=1$.

\medskip
The following properties of subgroups of $\PSLR$ play a role in this paper.

\begin{definition}
Let $G$ be a subgroup of $\PSLR$.
\begin{enumerate}
\item[(a)] The group~$G$ is called \textbf{discrete} if it does not contain any 
convergent sequence of pairwise distinct elements.

\item[(b)] The group~$G$ is called \textbf{elementary} if the commutator of any
two elements of infinite order has trace~2.

\item[(c)] A non-elementary discrete subgroup of~$\PSLR$ is called a \textbf{Fuchsian group}.

\item[(d)] The group~$G$ is called \textbf{elliptic} if every element $A\in G \setminus \{E\}$
is elliptic.

\end{enumerate}
\end{definition}

Notice that Fuchsian groups are sometimes defined by condition~(a) only.
In this paper we consider solely non-elementary Fuchsian groups. 
In the language of fractional transformations, the group~$G$ is elementary
if and only if two elements of infinite order have at least one common fixed point.

For subgroups $G=\langle A,B\rangle$ generated by two elements of $\PSLR$,
we choose representatives~$\widehat{A}$ of~$A$ and $\widehat{B}$ of~$B$ in $\SLR$ 
such that $\tr(\widehat{A}) \ge 0$ and $\tr(\widehat{B})\ge 0$. For every word~$C$ in~$A$
and~$B$, we then let $\tr(C)$ be the trace of the corresponding word in~$\widehat{A}$
and~$\widehat{B}$. The traces of elements of $\PSLR$ satisfy the following properties.

\begin{proposition}\label{prop:traceprops}
Let $A,B\in \PSLR$.
\begin{enumerate}
\item[(a)] $\tr(A^{-1}) = \tr(A)$ 

\item[(b)] $\tr(BAB^{-1}) = \tr(A)$ and $\tr(AB) = \tr(BA)$

\item[(c)] $\tr(AB^{-1}) = \tr(A) \cdot \tr(B) - \tr(AB)$ 

\item[(d)] $\tr([A,B]) =  \tr(A)^2 + \tr(B)^2 + (\tr(AB))^2 - \tr(A) \cdot \tr(B) \cdot \tr(AB) -2$

\item[(e)] If $\vert \tr(A) \vert \le 2$ then $\tr([A,B]) \ge 2$.

\item[(f)] If $\tr([A,B]) < 2$ then $\vert \tr(A)\vert > 2$ and $\vert \tr(B)\vert > 2$.

\end{enumerate}
\end{proposition}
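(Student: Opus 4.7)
The plan is to prove parts (a)--(d) by direct computation on chosen lifts of $A$ and $B$ to $\SLR$ via the Cayley--Hamilton theorem, and then to derive the inequalities (e) and (f) from (d) by completing the square together with a short geometric normal-form argument.

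Parts (a) and (b) are immediate: writing a lift $\overline{A} = \binom{a\;b}{c\;d}$ with $ad-bc=1$ gives $\overline{A}^{-1} = \binom{d\;-b}{-c\;a}$, which has the same trace, and the identities in (b) are the usual cyclic invariance of the matrix trace. For (c), the Cayley--Hamilton identity $\overline{B}^2 - \tr(\overline{B})\,\overline{B} + I_2 = 0$ yields $\overline{B}^{-1} = \tr(\overline{B})\, I_2 - \overline{B}$; left-multiplying by $\overline{A}$ and taking traces produces $\tr(AB^{-1}) = \tr(A)\tr(B) - \tr(AB)$. To obtain (d), I would apply (c) with $X:=AB$ and $Y:=BA$: since $XY^{-1}=[A,B]$, $\tr(X)=\tr(Y)=\tr(AB)$ by (b), and $\tr(XY) = \tr(A^2B^2)$ by cyclicity, this produces $\tr([A,B]) = \tr(AB)^2 - \tr(A^2B^2)$. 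A second application of Cayley--Hamilton in the form $A^2B^2 = \tr(A)\tr(B)\, AB - \tr(A)\, A - \tr(B)\, B + I_2$ gives $\tr(A^2B^2) = \tr(A)\tr(B)\tr(AB) - \tr(A)^2 - \tr(B)^2 + 2$, and substituting recovers the Fricke identity (d).

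For (e) and (f), the key step is to rewrite (d) as
$$
\tr([A,B]) - 2 \;=\; \bigl(\tr(AB) - \tfrac{1}{2}\tr(A)\tr(B)\bigr)^2 - \tfrac{1}{4}\bigl(\tr(A)^2 - 4\bigr)\bigl(\tr(B)^2 - 4\bigr).
$$
If $|\tr(A)|\le 2$ and $|\tr(B)|\ge 2$, then the product $(\tr(A)^2-4)(\tr(B)^2-4)$ is non-positive and (e) is immediate; the parabolic subcase $|\tr(A)|=2$ also follows at once. In the remaining subcase where both $|\tr(A)|<2$ and $|\tr(B)|<2$, I would conjugate a lift of $A$ to a rotation matrix with angle $\theta$ satisfying $2\cos\theta = \tr(A)$, write $\overline{B} = \binom{a\;b}{c\;d}$, and compute $\tr(AB) - \tfrac{1}{2}\tr(A)\tr(B) = (b-c)\sin\theta$. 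The algebraic identity $(b-c)^2 = (b+c)^2 + 4(a - \tr(B)/2)^2 + 4 - \tr(B)^2$, which uses only $a+d=\tr(B)$ and $ad-bc=1$, gives $(b-c)^2 \ge 4-\tr(B)^2$, which is exactly what is needed. For (f), the hypothesis $\tr([A,B]) - 2 \le 0$ forces $(\tr(A)^2-4)(\tr(B)^2-4) \ge 0$, and the case where both factors are non-positive is then ruled out by (e). The main obstacle is precisely this last subcase of (e): the algebraic identity (d) would allow $\tr([A,B])$ to drop below $2$ if the three trace parameters were unconstrained reals, so the reality of the entries and the determinant-one condition must genuinely enter through the inequality $(b-c)^2 \ge 4-\tr(B)^2$.
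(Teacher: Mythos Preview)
Your argument is correct and substantially more detailed than the paper's own proof, which simply records that (a)--(d) are classical (citing \cite{Ros1} for (c) and (d)), refers to \cite[Lemma~1]{FR} for~(e), and notes that~(f) follows from~(e). In effect the paper gives no proof at all, whereas you supply one.

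Your route to the Fricke identity~(d) via~(c) applied to $X=AB$, $Y=BA$ together with the Cayley--Hamilton expansion of $A^2B^2$ is clean and standard. The completed-square rewriting
\[
\tr([A,B])-2=\Bigl(\tr(AB)-\tfrac12\tr(A)\tr(B)\Bigr)^2-\tfrac14\bigl(\tr(A)^2-4\bigr)\bigl(\tr(B)^2-4\bigr)
\]
together with the rotation normal form and the determinant identity $(b-c)^2=(b+c)^2+(a-d)^2+4-\tr(B)^2$ gives a genuinely self-contained proof of~(e) where the paper provides none; this is the real content you add.

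One small caveat on~(f): saying the case where both factors are non-positive is ``ruled out by~(e)'' is slightly too quick. Part~(e) only gives $\tr([A,B])\ge 2$ in that case, so combined with the hypothesis $\tr([A,B])\le 2$ you obtain equality rather than a contradiction. Indeed, (f) as literally stated fails on the boundary $\tr([A,B])=2$ (take $A=B$ any rotation). This boundary case is excluded everywhere the proposition is applied in the paper, and the paper's own one-line derivation of~(f) from~(e) has exactly the same gap, so this is an issue with the statement rather than with your strategy.
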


\begin{proof}
Formulas (a) and~(b) are classically known and can be verified by direct calculation.
Formulas~(c) and~(d) are also well-known and are, for instance, shown in~\cite[Lemma 6]{Ros1}.
Claim~(e) was shown in~\cite[Lemma 1]{FR}. Claim~(f) follows immediately from~(e).
\end{proof}

For more complicated trace formulas, we need the following polynomials.

\begin{definition}
In the polynomial ring $\mathbb{R}[x]$, we recursively define polynomials
$S_0(x)=0$, $S_1(x)=1$, and
$$
S_n(x) \;=\; x\cdot S_{n-1}(x) - S_{n-2}(x)
$$
for $n\ge 2$. Moreover, for $m<0$, we let $S_m(x) = -S_{-m}(x)$.
Then the polynomial $S_n(x)$ is called the $n$-th \textbf{Chebyshev S-polynomial}
or the \textbf{scaled Chebyshev polynomial of the second kind}.
\end{definition}

The Chebyshev S-polynomials are related to the usual Chebyshev polynomials 
of the second kind $U_n(x)$ by the formula $S_n(x)=U_{n-1}(x/2)$. 
They have the following basic properties which can be derived 
from the analogous formulas for the polynomials $U_n(x)$ 
(see for instance~\cite{AS}, \cite{MH}, \cite{Sny}).

\begin{proposition}\label{prop:SnRules}
Let $m,n\in\mathbb{N}$. Then the following formulas hold true.
\begin{enumerate}
\item[(a)] For $\phi\in ]0,\frac{\pi}{2}]$, we have
$S_n(2\cos(\phi)) = \frac{\sin(n\phi)}{\sin(\phi)}$.

\item[(b)] $S_{m+n}(x) = S_m(x) S_{n+1}(x) - S_{m-1}(x)S_n(x)$

\item[(c)] $S_n(x)^2 - S_{n+1}(x) S_{n-1}(x) = 1$

\item[(d)] $S_{mn}(x) = S_m(S_{n+1}(x)-S_{n-1}(x))\cdot S_n(x)$

\item[(e)] $S_n(x + \frac{1}{x}) = \frac{1-x^{2n}}{x^{n-1}(1-x^2)}$

\item[(f)] $S_n(2)=n$

\end{enumerate}
\end{proposition}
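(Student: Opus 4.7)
The plan is to establish (a) and (f) first by direct induction on $n$, then to deduce (b), (c), and (d) by verifying each identity at $x = 2\cos\phi$ for $\phi \in (0, \pi/2]$ using (a); since this substitution takes infinitely many values, any polynomial identity verified on it extends to an identity in $\mathbb{R}[x]$. Identity (e) will be handled by the complex substitution $x = e^{i\phi}$. For (a), the base cases $n = 0, 1$ are immediate from the definition of $S_n$, and the inductive step follows from the defining recursion together with the product-to-sum formula $2\cos\phi \cdot \sin((n-1)\phi) = \sin(n\phi) + \sin((n-2)\phi)$. For (f), the recursion gives $S_n(2) = 2(n-1) - (n-2) = n$ by a one-line induction.

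For (b), substituting $x = 2\cos\phi$ and applying (a) reduces the claim to the trigonometric identity
\[
\sin((m+n)\phi)\sin\phi = \sin(m\phi)\sin((n+1)\phi) - \sin((m-1)\phi)\sin(n\phi),
\]
which unfolds via product-to-sum. Similarly, (c) becomes $\sin^2(n\phi) - \sin((n+1)\phi)\sin((n-1)\phi) = \sin^2\phi$, a classical identity (alternatively, (c) follows from $\det M^n = 1$, where $M$ is the $2 \times 2$ matrix with rows $(x, -1)$ and $(1, 0)$, whose $n$-th power has entries expressible in terms of $S_n$ and $S_{n\pm 1}$). For (d), identity (a) combined with $\sin((n+1)\phi) - \sin((n-1)\phi) = 2\cos(n\phi)\sin\phi$ yields $S_{n+1}(2\cos\phi) - S_{n-1}(2\cos\phi) = 2\cos(n\phi)$. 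Applying (a) once more with argument $2\cos(n\phi)$ then shows that the right-hand side of (d) evaluated at $2\cos\phi$ equals $\sin(mn\phi)/\sin(n\phi) \cdot \sin(n\phi)/\sin\phi = \sin(mn\phi)/\sin\phi = S_{mn}(2\cos\phi)$.

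For (e), set $x = e^{i\phi}$, so $x + 1/x = 2\cos\phi$; by (a),
\[
S_n(x + 1/x) = \frac{e^{in\phi} - e^{-in\phi}}{e^{i\phi} - e^{-i\phi}},
\]
and multiplying numerator and denominator by $-e^{i(n-1)\phi}$ produces $(1 - x^{2n})/(x^{n-1}(1 - x^2))$. Since both sides of (e) are rational functions of $x$ agreeing on an infinite subset of $\mathbb{C}$, they coincide in $\mathbb{R}(x)$. The main obstacle is mostly bookkeeping: carefully justifying the passage from verification on the infinite set $\{2\cos\phi\} \subset [-2, 2]$ to the polynomial identity in $\mathbb{R}[x]$, and accurately unfolding the product-to-sum expansions needed in (b)--(d). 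Once (a) is in hand, the remaining identities reduce to a sequence of routine trigonometric manipulations.
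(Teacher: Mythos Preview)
Your proposal is correct and largely self-contained. The paper's own proof takes a different, more citation-heavy route: it derives (a), (d), (e), and (f) by quoting standard references for Chebyshev polynomials (Abramowitz--Stegun, Mason--Handscomb), gives an argument for (b) that matches yours in spirit (substituting $x=2\cos\phi$ and invoking the addition law for $\sin$), and proves (c) purely algebraically from the recursion
\[
S_n^2 - S_{n+1}S_{n-1} = S_n(xS_{n-1}-S_{n-2}) - (xS_n-S_{n-1})S_{n-1} = S_{n-1}^2 - S_nS_{n-2},
\]
reducing to the base case without any trigonometry. What you gain is a unified, reference-free treatment: once (a) is established by the two-line induction, every remaining identity is reduced to a trigonometric check on an infinite set and hence holds in $\mathbb{R}[x]$. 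What the paper gains is brevity and a slightly cleaner handling of (c), where the recursive telescoping avoids the product-to-sum expansions entirely. One small point worth tightening in your write-up of (d): when you apply (a) at the argument $2\cos(n\phi)$, note that your inductive proof of (a) actually works for every $\phi$ with $\sin\phi\neq 0$, not just $\phi\in(0,\pi/2]$, so there is no genuine range obstruction; alternatively restrict $\phi$ to $(0,\pi/(2n)]$, which still yields infinitely many values of $x$.
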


\begin{proof}
The first formula follows, for instance, from~\cite[Formula 22.3.16]{AS}.

Formula~(b) follows from the addition theorem for the sin function.
More precisely, let $x=2\cos(\phi)$ and use
$$
S_{m+n}(x)\cdot \sin(\phi) = \sin((m+n)\phi) = \sin(m\phi) \cos(n\phi)
+ \cos(m\phi)\sin(n\phi)
$$
When we multiply the right-hand side of~(b) by $\sin(\phi)$ and use the addition theorem
on~$\sin(m\phi-\phi)$ and $\sin(n\phi+\phi)$, we obtain the same result.

To prove~(c), we apply the recursion formula and get
\begin{gather*}
S_n(x)^2 - S_{n+1}(x)S_{n-1}(x) = S_n(x)(xS_{n-1}(x) - S_{n-2}(x))\qquad\qquad\qquad \\ 
\qquad\qquad\qquad\qquad - (xS_n(x)-S_{n-1}(x))S_{n-1}(x) = S_{n-1}(x)^2 - S_n(x)S_{n-2}(x)
\end{gather*}

Formula~(d) is a consequence of the well-known composition formula 
$U_{mn-1}(x) = U_{m-1}(T_n(x))\cdot U_{n-1}(x)$ for Chebyshev polynomials
of the first and second kind (cf.~\cite[Ex.~1.5.3]{MH}).

Claim~(e) follows from $U_{n-1}(\frac{1}{2}(x+x^{-1})) = \frac{x^n-x^{-n}}{x-x^{-1}}$
(cf.~\cite[Formula~1.5.1]{MH}). 

Finally, the value in~(f) is for instance given 
in~\cite[Formula 22.2.7]{AS}. (However, note the index shift in our definition of $S_n(x)$.)
\end{proof}

\begin{proposition}\label{prop:MoreSnProps}
Let $n\in\mathbb{N}$, $\phi\in \mathbb{R}$, and $a\in\mathbb{R}$ with $a>2$. 
Then the polynomials $S_n(x)$ have the following properties.
\begin{enumerate}
\item[(a)] $S_n(2\cosh(\phi)) = \frac{\sinh(n\phi)}{\sinh(\phi)}$ for every $\phi\in\mathbb{R}$.

\item[(b)] $S_n(a) = \frac{(a+\sqrt{a^2-4})^n - (a+\sqrt{a^2-4})^{-n}}{2\sqrt{a^2-4}}$ 

\item[(c)] $\lim_{n\to\infty} \frac{S_{n+1}(a)}{S_n(a)} = \frac{1}{2} (a+ \sqrt{a^2-4})$ 

\item[(d)] $S_n(a) > n$
\end{enumerate}
\end{proposition}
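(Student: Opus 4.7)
The plan is to derive all three formulas from the defining recursion $S_n(x) = x\,S_{n-1}(x) - S_{n-2}(x)$ together with elementary identities for $\sinh$ and $\cosh$.

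For part~(a), I would proceed by induction on $n$. The base cases $n=0$ and $n=1$ are immediate: $S_0 = 0 = \sinh(0)/\sinh(\phi)$ and $S_1 = 1 = \sinh(\phi)/\sinh(\phi)$. For the inductive step, the recursion gives $S_n(2\cosh\phi) = 2\cosh(\phi)\,\sinh((n-1)\phi)/\sinh(\phi) - \sinh((n-2)\phi)/\sinh(\phi)$, and the hyperbolic identity $2\cosh(\phi)\sinh((n-1)\phi) = \sinh(n\phi) + \sinh((n-2)\phi)$, which follows from the addition theorem $\sinh(A\pm B) = \sinh A\cosh B \pm \cosh A\sinh B$, collapses this to $\sinh(n\phi)/\sinh(\phi)$. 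Alternatively, (a) is the analytic continuation of Proposition~\ref{prop:SnRules}(a) under the substitution $\phi \mapsto i\phi$, via $\cos(i\phi)=\cosh(\phi)$ and $\sin(i\phi)=i\sinh(\phi)$.

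For part~(b), since $a > 2$ I would set $a = 2\cosh(\phi)$ for the unique $\phi > 0$, so that $\sqrt{a^2-4} = 2\sinh(\phi)$. Applying~(a) and writing $\sinh$ in exponential form yields $S_n(a) = \sinh(n\phi)/\sinh(\phi) = (y^n - y^{-n})/(y - y^{-1})$ with $y = e^\phi$. Solving $y + y^{-1} = a$ subject to $y > 1$ gives $y = (a+\sqrt{a^2-4})/2$ and $y - y^{-1} = \sqrt{a^2-4}$, from which the claimed closed form drops out after multiplying numerator and denominator by the appropriate power of~$2$.

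For part~(c), the formula from~(b) gives $S_n(a) = (y^n - y^{-n})/(y - y^{-1})$ with $y = (a+\sqrt{a^2-4})/2 > 1$ (the strict inequality uses $a > 2$). Dividing both numerator and denominator of $S_{n+1}(a)/S_n(a)$ by $y^n$ rewrites the ratio as $(y - y^{-1}\cdot y^{-2n})/(1 - y^{-2n})$, and since $y > 1$ implies $y^{-2n} \to 0$, taking $n\to\infty$ produces the limit $y = \tfrac{1}{2}(a+\sqrt{a^2-4})$. The arguments are entirely routine once (a) is in hand; I do not anticipate a substantial obstacle, the only care needed being the verification $y > 1$ in~(c) to justify the limit passage.
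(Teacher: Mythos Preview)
Your proposal is correct and close in spirit to the paper's argument, but with a different emphasis in part~(a). The paper proves~(a) in one line by the complex substitution you mention as your alternative: it sets $\phi = ix$ in Proposition~\ref{prop:SnRules}(a) and uses $\cos(ix)=\cosh(x)$, $\sin(ix)=i\sinh(x)$. Your primary argument by induction on~$n$ via the recursion and the product-to-sum identity $2\cosh(\phi)\sinh((n-1)\phi)=\sinh(n\phi)+\sinh((n-2)\phi)$ is more self-contained, as it does not rely on the trigonometric formula already established. For~(b) the paper simply cites the corresponding closed form for $U_n$ from the literature, whereas you derive it directly from~(a) via $a=2\cosh\phi$ and $y=e^{\phi}$; your route is more explicit and has the advantage of making the dependence on~(a) transparent. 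For~(c) both you and the paper obtain the limit directly from~(b), and your observation that $y>1$ (equivalently $a>2$) is precisely what is needed to justify $y^{-2n}\to 0$. There is no substantive gap; just be careful when carrying out the final bookkeeping in~(b), tracking the factors of~$2$ between $y=(a+\sqrt{a^2-4})/2$ and $a+\sqrt{a^2-4}$.
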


\begin{proof}
The first formula is obtained by setting $\phi=ix$ in
$S_n(2\cos(\phi)) = \frac{\sin(n\phi)}{\sin(\phi)}$ and using $\cosh(x)=\cos(ix)$
as well as $\sinh(x)=-i\sin(ix)$.

Formula~(b) is a version of the analogous formula for $U_n(x)$, cf.~\cite[Formula 1.5.2]{MH}.
Next we note that the limit in~(c) follows readily from~(b).

Finally, we show~(d). Since $a>2$, we can write $a = 2 \cosh(\phi)$ with $\phi>0$ and use~(a).
It follows that $S_n(a)$ is strictly increasing for $a\ge 2$. As Proposition~\ref{prop:SnRules}.f
yields $S_n(2)=n$, the claim follows.
\end{proof}

Using the Chebyshev S-polynomials, we can prove the following formulas
for powers and roots in $\PSLR$.

\begin{proposition}[\textbf{Power Formulas in $\PSLR$}]\label{prop:power}\, \\
Let $m,n\ge 0$, and let $A,B\in \PSLR$. Then the following formulas hold.
\begin{enumerate}
\item[(a)] $A^n = S_n(\tr(A)) \cdot A - S_{n-1}(\tr(A))\cdot E$

\item[(b)] $\tr(A^n) = \tr(A)\, S_n(\tr(A)) - 2\, S_{n-1}(\tr(A))$

\item[(c)] $\tr([A^m,B^n]) - 2 = S_m(\tr(A))^2 \cdot S_n(\tr(B))^2 \cdot (\tr([A,B] - 2)$

\item[(d)] $\tr(A^n B) = S_n(\tr(A)) \cdot \tr(AB) - S_{n-1}(\tr(A))\cdot \tr(B)$ for $n\ge 1$.

\end{enumerate}
\end{proposition}

\begin{proof}
First we prove~(a) by induction on~$n$. For $n=1$, this is clear. For $n=2$, we have to show
$A^2 = \tr(A)\cdot A - E$. This follows from the Cayley-Hamilton Theorem and $\det(A)=1$.
For $n\ge 3$, we use the recursive formula for $S_n(x)$ and the case $n=2$ to compute
\begin{align*}
A^n &\;=\; A\, (S_{n-1}(\tr(A))\cdot A - S_{n-2}(\tr(A))\, E)\\ 
&\;=\; S_{n-1}(\tr(A))\, (\tr(A)\,A-E) - S_{n-2}(\tr(A))\, A\\  
&\;=\; (\tr(A)\,S_{n-1}(\tr(A))-S_{n-2}(\tr(A)))\, A - S_{n-1}(\tr(A))\, E\\
&\;=\; S_n(\tr(A))\, A - S_{n-1}(\tr(A))\, E
\end{align*}
Claim~(b) is an immediate consequence of~(a).

Thus we show~(c) next. It suffices to treat the case $m=1$, since then
\begin{align*}
\tr([A^m,B^n])-2 &\;=\; S_n(\tr(B))^2\, (\tr([A^m,B])-2) &\\ 
&\;=\; S_n(\tr(B))^2\, (\tr([B,A^m])-2) &\\ 
&\;=\; S_n(\tr(B))^2 S_m(\tr(A))^2\, (\tr([A,B])-2) &
\end{align*}
Thus we claim that $\tr([A,B^n])-2 = S_n(\tr(B))^2(\tr([A,B])-2)$. To ease the notation,
we let $x=\tr(A)$, $y=\tr(B)$, and $s_i = S_i(\tr(B))$ for $i\in\mathbb{N}$.
Using Proposition~\ref{prop:traceprops} and~(a), we calculate
$$
\tr(AB^n) = \tr(A\, (s_n\,B - s_{n-1}\, E)) = s_n\, \tr(AB) - s_{n-1}\, x
$$
and $\tr(AB^{-n}) = \tr(A^{-1}B^n) = s_n\, \tr(A^{-1}B) - s_{n-1}\, x$. This yields
\begin{align*}
\tr(&[A,B^n]) \;=\; x^2 +\tr(B^n)^2 - \tr(AB^n)\tr(AB^{-n})-4 &\\
&=\; x^2+(s_n y-2s_{n-1})^2 - (s_n\tr(AB)-s_{n-1}x)(s_n\tr(A^{-1}B) - s_{n-1}x) {-}\,4 &\\
&=\; x^2 +s_n^2 y^2 - 4 s_ns_{n-1}y + 4s_{n-1}^2 - s_n^2 \tr(AB)\tr(A^{-1}B) &\\ 
& \qquad + s_n s_{n-1} x^2y - s_{n-1}^2 x^2 -4 &\\
&=\; (1+s_{n+1}s_{n-1})\, x^2 + s_n^2 y^2 - 4 s_{n+1}s_{n-1} -s_n^2 \tr(AB)\tr(A^{-1}B) - 4 &\\
&=\; s_n^2 x^2 + s_n^2 y^2 - s_n^2 \tr(AB)\tr(A^{-1}B) - 4 s_n^2 &\\
&=\; s_n^2\, (\tr([A,B]-2), 
\end{align*}
as claimed.

Finally, we prove~(d) by induction on~$n$. For $n=1$, the claim is clearly true.
For $n\ge 2$, we let $x=\tr(A)$ and calculate
\begin{align*}
\tr(A^n & B) \;=\; x\, \tr(A^{n-1}B) - \tr(A\, (A^{n-1}B)^{-1}) \\
\;=\;& x\, [ S_{n-1}(x) \tr(AB) - S_{n-2}(x) \tr(B)] - \tr(A^{n-1}\cdot BA^{-1}) \\
\;=\;& x S_{n-1}(x) \tr(AB) - x S_{n-2}(x) \tr(B) - S_{n-1}(x) \tr(B) + S_{n-2}(x) \tr(BA^{-1}) \\
\;=\;& \tr(AB)\, [x\, S_{n-1}(x) - S_{n-2}(x)] - S_{n-1}(x) \tr(B)\\ 
\;=\;& S_n(x)\tr(AB)  - S_{n-1}(x)\tr(B),
\end{align*}
as claimed.
\end{proof}

\begin{proposition}[\textbf{Root Formulas in $\PSLR$}]$\mathstrut$\label{prop:root}\\
Let $m,n\in\mathbb{N}_+$, and let $A,B,R,S\in\PSLR \setminus \{E\}$ such that $R^m=A$ and $S^n=B$.
For $i,j\in\mathbb{N}$, let $s_i = S_i(\tr(R))$ and $t_j=S_j(\tr(S))$.
Then the following claims hold.
\begin{enumerate}
\item[(a)] $S_k(\tr(R)) \ne 0$ if $\vert \tr(R) \vert \ge 2$, or if~$R$ has a finite order
which does not divide~$k$.

\item[(b)] $R = \frac{1}{s_m}\, (A + s_{m-1}E)$ if $s_m\ne 0$ and
$S = \frac{1}{t_n}\, (B + t_{n-1}E)$ if $t_n\ne 0$.

\item[(c)] $RS = \frac{1}{s_m t_n}\, (AB + t_{n-1}A + s_{m-1}B + s_{m-1}t_{n-1}E)$
if $s_m,t_n\ne 0$.

\item[(d)] $\tr(RS) = \frac{1}{s_m t_n}\, (\tr(AB) + s_{m+1}t_{n-1} + s_{m-1} t_{n+1})$
if $s_m,t_n\ne 0$.
\end{enumerate}
\end{proposition}

\begin{proof}
First we show (a). For $\tr(R) \ge 2$, this follows from Proposition~\ref{prop:MoreSnProps}.a.
For $\tr(R) \le -2$, we can use the fact that $S_k(x)$ is even or odd, whence
$S_k(\tr(A)) = \pm S_k(-\tr(A)) \ne 0$. Now assume that $R$ has a finite order $\ell=\ord(R)\ge 2$.
Up to conjugacy, the matrix~$R$ is the rotation matrix by some angle $\phi= p\pi / \ell$, where
$\gcd(p,\ell)=1$. Then $\tr(R) = 2 \cos(\phi)$ and $S_k(\tr(R)) = \sin(k\phi)/\sin(\phi)$ by
Proposition~\ref{prop:MoreSnProps}.a. Thus $S_k(\tr(R))=0$ if and only if $kp/\ell$
is an integer, i.e., if and only if~$\ell$ divides~$k$.

Claim~(b) follows immediately from Proposition~\ref{prop:power}.a
by replacing~$A$ with~$R$ and~$S$, respectively.
Formula~(c) follows by multiplying the two formulas in~(b). 

Finally, let us prove part~(d). We let $x=\tr(R)$ and $y=\tr(S)$ and apply the trace
map to~(c).
Moreover, we use the formulas $\tr(A) = \tr(R^m) = x\,s_m - 2 s_{m-1} = s_{m+1} - s_{m-1}$ as well as
$\tr(B) = \tr(S^n) = y\,t_n - 2 t_{n-1} = t_{n+1} - t_{n-1}$ which are a consequence
of the recursion formula for $S_i(x)$ and Proposition~\ref{prop:power}.a. The result follows readily. 
\end{proof}

Now we have all tools to give a proof of Theorem~\ref{thm:CompTraceTriple}. 
Let $A,B,R,S \in \PSLR$ such that $R^m=A$ and $S^n=B$ for some $m,n\in\mathbb{N}_+$, and
assume that we are given the trace triple $(\tr(A),\tr(B),\tr(AB))$ via algebraic numbers, i.e.,
via their minimal polynomials and isolating intervals.
Notice that parts~(a), (b), (c) of Theorem~\ref{thm:CompTraceTriple} allow us to calculate isolating intervals for
$x=\tr(R)$ and $y=\tr(S)$. Consequently, part~(d) yields the minimal polynomials of~$x$ and~$y$, and finally part~(e)
computes $z=\tr(RS)$.

\begin{proof}[{\bf Proof of Theorem~\ref{thm:CompTraceTriple}}]
Claim~(a) is a well-known property of parabolic elements and follows for instance
from Proposition~\ref{prop:power} together with Proposition~\ref{prop:SnRules}.f.

Next we show~(b). The number $\lambda = \frac{1}{2}(x+\sqrt{x^2-4})$ is the {\it multiplier}
of~$A$, i.e., its larger eigenvalue. Then the multiplier~$\mu$ of~$R$ satisfies $\mu^m=\lambda$,
and the trace of~$R$ is the sum of its two eigenvalues $\mu + \mu^{-1}$. This yields the stated formula.

The proof of~(c) proceeds analogously, but one has to take into account that there is
not a unique matrix~$R$. Instead, notice that $\lambda=e^{i\phi}$ is a complex number on the unit circle
and $\tr(A) = 2\,\cos(\phi)$ with $0<\phi\le \pi/2$. Then we obtain 
$x=\tr(R) = 2\,\cos(\frac{\phi}{m} + \ell\,\frac{\pi}{m})$ with $\ell\in \{0,\dots,m-1\}$, and this leads to
$\mu^m = (-1)^\ell\, \lambda$. Consequently, the imaginary parts in
$x = \mu + \mu^{-1}$ cancel, if we choose the two $m$-th roots to be complex conjugates.

Next, claim~(d) follows from Proposition~\ref{prop:power}.b. Plugging 
$x = x'\, S_m(x') - 2 S_{m-1}(x')$ into the minimal polynomial of~$x$ yields the claim.

Finally, the formula in~(e) is Proposition~\ref{prop:power}.c in the case
$m>1$, $n>1$, and it follows from Proposition~\ref{prop:power}.d in the 
other cases.
\end{proof}

%
%

\section{The Trace Minimization Algorithm}
\label{sec3}

The main tools in the various algorithms given in the introduction are the
Trace Minimization Algorithm and the Extended Trace Minimization Algorithm.
The trace minimization technique has been introduced by the third author in his
doctoral thesis~\cite{Ros0} whose key parts appeared also in~\cite{Ros1}.
Later it was used in various forms in a number of papers~\cite{KIR, Ros3,Ros4}, 
but apparently it has never been written up formally as an explicit algorithm. 
The present section is intended to remedy this situation. 

The Trace Minimization Algorithm is based on Nielsen transformations
and the concept of Nielsen equivalence of pairs of matrices which we recall first.

\begin{definition}
Let $A,B,C,D \in \PSLR$. The pair $(A,B)$ is called \textbf{Nielsen equivalent}
to the pair $(C,D)$ if there exists a regular Nielsen transformation from $(A,B)$
to $(C,D)$. A regular Nielsen transformation is a finite product of transformations 
of one of the following forms:
\begin{enumerate}
\item[(1)] Replace $(A,B)$ by $(B,A)$.
\item[(2)] Replace $(A,B)$ by $(A^{-1},B)$.
\item[(3)] Replace $(A,B)$ by $(A,BA)$.
\end{enumerate}
In this case we write $(A,B) \simN (C,D)$.
\end{definition}

These operations can also be combined in various ways.

\begin{remark}\label{rem:OtherNielsen}
By composing operations (1), (2), and (3), it follows that the following
operations are Nielsen transformations, too.
\begin{enumerate}
\item[(4)] Replace $(A,B)$ by $(A,BA^{-1})$.
\item[(5)] Replace $(A,B)$ by $(A,AB)$.
\item[(6)] Replace $(A,B)$ by $(A,A^{-1}B)$.
\end{enumerate}  
\end{remark}

Subsequently, the following invariant under Nielsen equivalence will be important.

\begin{proposition}\label{prop:trcomminv}
For $A,B,C,D \in \PSLR$ such that $(A,B) \simN (C,D)$, we have
$\tr([A,B]) = \tr([C,D])$.
\end{proposition}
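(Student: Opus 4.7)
The plan is to reduce to the three generators of regular Nielsen transformations and verify invariance of $\tr([\,\cdot\,,\cdot\,])$ on each. Since a regular Nielsen transformation is by definition a finite composition of the moves (1)--(3), and the equality of commutator traces is transitive, it suffices to check that each of the three basic moves preserves the commutator trace. Throughout, I would use only parts (a) and~(b) of Proposition~\ref{prop:traceprops}, namely $\tr(X^{-1})=\tr(X)$ and invariance of the trace under conjugation.

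For move~(1), which replaces $(A,B)$ by $(B,A)$, the identity $[B,A]=(ABA^{-1}B^{-1})^{-1}=[A,B]^{-1}$ combined with $\tr(X^{-1})=\tr(X)$ gives $\tr([B,A])=\tr([A,B])$. For move~(2), which replaces $(A,B)$ by $(A^{-1},B)$, I would compute
$$
A\,[A^{-1},B]\,A^{-1} \;=\; A\,(A^{-1}BAB^{-1})\,A^{-1} \;=\; BAB^{-1}A^{-1} \;=\; [A,B]^{-1},
$$
so that $[A^{-1},B]$ is conjugate to $[A,B]^{-1}$; two applications of Proposition~\ref{prop:traceprops} then yield $\tr([A^{-1},B])=\tr([A,B])$. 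Finally, for move~(3), which replaces $(A,B)$ by $(A,BA)$, a direct cancellation
$$
[A,BA] \;=\; A\,(BA)\,A^{-1}\,(BA)^{-1} \;=\; (ABA)\,A^{-1}\,A^{-1}B^{-1} \;=\; ABA^{-1}B^{-1} \;=\; [A,B]
$$
gives the invariance on the nose, without even passing to traces.

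There is no real obstacle here: each of the three cases reduces to a one-line algebraic identity in the free group followed by the two elementary trace properties already established. The only mild point worth a sentence in the write-up is the remark that the commutator trace is a well-defined real number (not just defined up to sign), which was already noted in Section~\ref{sec2}, so applying $\tr$ to both sides of the identities above is unambiguous.
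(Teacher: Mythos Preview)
Your proof is correct and follows essentially the same approach as the paper: reduce to the three elementary Nielsen moves and verify each using only the trace identities in Proposition~\ref{prop:traceprops}(a),(b). The only cosmetic difference is in move~(2), where the paper uses the cyclic invariance $\tr(XY)=\tr(YX)$ directly rather than your conjugation-plus-inverse argument, but both routes amount to the same one-line computation.
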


\begin{proof}
Using Proposition~\ref{prop:traceprops}, we argue as follows.
The invariance under operation (1) follows from 
$$
\tr([B,A]) = \tr(BAB^{-1}A^{-1}) = \tr((BAB^{-1}A^{-1})^{-1}) = \tr(ABA^{-1}B^{-1}).
$$
The invariance under operation (2) follows from 
$$
\tr([A^{-1},B]) = \tr(A^{-1}(BAB^{-1}))
= \tr(BAB^{-1}A^{-1}) = \tr([B,A])=\tr([A,B]).
$$
The invariance under operation (3) follows from
\[
\tr([A,BA]) = \tr( A(BA)A^{-1}(A^{-1}B^{-1}) ) = \tr([A,B]). \qedhere
\]
\end{proof}

Now we fix two matrices $A,B\in \PSLR$. They generate a group $G=\langle A,B\rangle$.
Recall that, in~$\SLR$, we choose representatives $\widehat{A}$ and $\widehat{B}$
of~$A$ and~$B$, respectively, such that $\tr(\widehat{A})\ge 0$ and $\tr(\widehat{B})\ge 0$.
Then every element of~$G$ which is given as a word in~$A,B,A^{-1}, B^{-1}$ has a unique trace defined by
the trace of the representative given by the same word in~$\widehat{A}$ and $\widehat{B}$.

For a pair $(U,V) \in G^2$, we form its \textbf{trace triple}
$(\tr(U),\tr(V),\tr(UV))$. The following proposition describes the effect of Nielsen transformations
on these trace triples.

\begin{proposition}\label{prop:Replace}
Let $U,V \in G$, let $x=\tr(U)$, let $y=\tr(V)$, and let $z=\tr(UV)$.
\begin{enumerate}
\item[(a)] If we replace $(U,V)$ by $(V,U)$ then $(x,y,z)$ is replaced by $(y,x,z)$.

\item[(b)] If we replace $(U,V)$ by $(U^{-1},UV)$ then $(x,y,z)$ is replaced by $(x,z,y)$.

\item[(c)] If we replace $(U,V)$ by $(UV,V^{-1})$ then $(x,y,z)$ is replaced by $(z,y,x)$.

\item[(d)] If we replace $(U,V)$ by $(U^{-1},V)$ or $(U,V^{-1})$ then $(x,y,z)$ is 
replaced by $(x,y,xy-z)$.

\item[(e)] If we replace $(U,V)$ by $(U,U^{-1}V)$ then $(x,y,z)$ is replaced by $(x,xy-z,y)$.

\item[(f)] If we replace $(U,V)$ by $(UV^{-1},V)$ then $(x,y,z)$ is replaced by $(xy-z,y,x)$. 

\end{enumerate}
\end{proposition}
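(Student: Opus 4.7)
The proposal is that each of the six items is a direct computation, plugging the new pair into the definition of the trace triple and simplifying with the elementary trace identities of Proposition~\ref{prop:traceprops}. The only nontrivial ingredient is the Fricke-type relation $\tr(AB^{-1}) = \tr(A)\tr(B) - \tr(AB)$ from Proposition~\ref{prop:traceprops}.c, which is what produces the term $xy-z$ appearing in (d)--(f); everything else uses only $\tr(C^{-1})=\tr(C)$ and cyclic invariance of the trace.

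First I would dispatch (a), (b), (c). For (a), $\tr(VU) = \tr(UV)=z$ by Proposition~\ref{prop:traceprops}.b, so the new triple is $(y,x,z)$. For (b), I compute the three entries of the new triple in turn: $\tr(U^{-1})=x$ by (a), $\tr(UV)=z$ by definition, and $\tr(U^{-1}\cdot UV) = \tr(V)=y$. This gives $(x,z,y)$. Part (c) is symmetric: $\tr(UV)=z$, $\tr(V^{-1})=y$, and $\tr(UV\cdot V^{-1})=\tr(U)=x$, yielding $(z,y,x)$.

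For (d), the first two entries of the new triple are $x$ and $y$ since traces are invariant under inversion. The third entry is $\tr(U^{-1}V)$ or $\tr(UV^{-1})$. For the second option, Proposition~\ref{prop:traceprops}.c applied to $(U,V)$ gives $\tr(UV^{-1})=xy-z$ directly. For the first option, I would use cyclic invariance and inversion to rewrite
\[
\tr(U^{-1}V) \;=\; \tr\bigl((U^{-1}V)^{-1}\bigr) \;=\; \tr(V^{-1}U) \;=\; \tr(UV^{-1}) \;=\; xy-z,
\]
so both replacements produce the triple $(x,y,xy-z)$.

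Parts (e) and (f) combine the previous observations. For (e), $\tr(U)=x$, $\tr(U^{-1}V)=xy-z$ by the calculation just carried out, and $\tr(U\cdot U^{-1}V)=\tr(V)=y$, giving $(x,xy-z,y)$. For (f), $\tr(UV^{-1})=xy-z$, $\tr(V)=y$, and $\tr(UV^{-1}\cdot V)=\tr(U)=x$, giving $(xy-z,y,x)$. There is no genuine obstacle here: the whole proposition is a bookkeeping exercise, and the only step worth stating explicitly is the identity $\tr(U^{-1}V)=xy-z$, which is what justifies treating the two choices in (d) uniformly.
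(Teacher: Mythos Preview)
Your proof is correct and follows exactly the same approach as the paper, which simply notes that the proposition follows from Proposition~\ref{prop:traceprops}.c and Remark~\ref{rem:OtherNielsen}. You have merely spelled out the routine verifications in more detail than the paper does.
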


\begin{proof} 
This follows from Proposition~\ref{prop:traceprops}.c and Remark~\ref{rem:OtherNielsen}.
\end{proof}

Notice that all replacements of $(U,V)$ in this proposition correspond to Nielsen transformations.
Furthermore, the effects on the trace triples generate all permutations.

The following algorithm starts with the trace triple of a
pair of matrices $(A,B) \in \PSLR^2$. It computes the trace triple of a
pair of matrices $(U,V)$ which is Nielsen equivalent to $(A,B)$ 
and whose trace triple is in some sense minimal.

The case $\tr([A,B])=2$ has to be excluded here. In this case, which corresponds to two 
fractional linear transformations with a joint fixed point,
the group $G=\langle A,B\rangle$ is metabelian and not discrete, and hence elementary. 
These simple case has been excluded from our considerations.

\begin{algorithm}[ht]
  \DontPrintSemicolon
  \SetAlgoLongEnd
  \SetKwInOut{Input}{Input}
  \SetKwInOut{Output}{Output}

  \Input{A trace triple $(\tr(A),\tr(B),\tr(AB))$ of $(A,B)\in\PSLR^2$ such that $\tau=\tr([A,B])\ne 2$.}
  \Output{The trace triple of a pair $(U,V)\in \PSLR^2$.}
  \BlankLine
  Compute $\tau = \tr(A)^2 + \tr(B)^2 + \tr(AB)^2 -\tr(A)\tr(B)\tr(AB)-2$.\;
  Using Proposition~\ref{prop:Replace}.a,b,c, permute the given trace triple (and, if necessary, switch 
  two of the three signs) to get a triple
  $(x,y,z)$ which satisfies $2 < x\le y\le \vert z\vert $ in case $\tau<2$
  and $0\le x\le y\le \vert z\vert $ in case $\tau>2$.\;
  \Repeat{$z'\le \frac{1}{2}\, x'y'$ \text{\rm in the case} $\tau<2$, \text{\rm or} $x'<0$ 
  \text{\rm in the case} $\tau>2$.}{
  Using Proposition~\ref{prop:Replace}.e,f,
  compute a triple  $(x',y',z')$ which satisfies $x'=xy-z \le y' = y \le z'=x$ or
  $x'=x \le y'=xy-z \le z'=y$.
  }
  \Return $(x',y',z')$ in the case $\tau<2$, and $(y',z',x')$ in the case $\tau>2$.\;
  \caption{The Trace Minimization Algorithm}
  \label{alg:TraceMin}
\end{algorithm}

\begin{theorem}[{\bf Trace Minimization Algorithm}]$\mathstrut$\label{thm:TraceMin}\\
Algorithm~\ref{alg:TraceMin} computes the trace triple of a pair $(U,V) \in G^2$ 
which is Nielsen equivalent to $(A,B)$ and satisfies
\begin{enumerate}
\item[(a)] $2 < \tr(U) \le \tr(V) \le \tr(UV) \le \frac{1}{2}\, \tr(U)\tr(V)$
in the case $\tau<2$, or

\item[(b)] $0\le \tr(U) \le \tr(V)$ and $\tr(UV)<0$ in the case $\tau>2$.
\end{enumerate}
Moreover, in case~(a) the resulting trace triple is uniquely determined by the 
stated condition.
\end{theorem}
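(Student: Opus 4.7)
The plan is to verify four claims: (i) the algorithm only performs Nielsen transformations, so the output stays in the Nielsen class of $(A,B)$; (ii) the initialization is always feasible; (iii) the main loop terminates with the stated output conditions; and (iv) in case~(a) the output trace triple is the unique one of this form in the Nielsen class.

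For (i) and the first half of (ii), each primitive step in the algorithm is a composition of elementary Nielsen transformations, so by Prop.~\ref{prop:trcomminv} the commutator trace $\tau=\tr([U,V])$ is preserved throughout. Using Prop.~\ref{prop:traceprops}(d), the identity
\begin{equation*}
x^{2}+y^{2}+z^{2}-xyz \;=\; \tau+2
\end{equation*}
confines every trace triple encountered to the fixed real algebraic surface $\Sigma_{\tau}$. On $\Sigma_{\tau}$, the involution $z\mapsto xy-z$ (the effect of operation~(d) in Prop.~\ref{prop:Replace}) interchanges the two roots of the defining $z$-quadratic, while operations (a)--(c) realise the full symmetric group on the coordinates. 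In case $\tau<2$, Prop.~\ref{prop:traceprops}(e,f) forces $|\tr(A)|,|\tr(B)|>2$ strictly (equality would force $\tau\ge 2$), and substituting $z=\pm 2$ into the equation of $\Sigma_{\tau}$ yields $(x\mp y)^{2}=2-\tau>0$, ruling out $|z|\le 2$; one application of~(d) then gives $z>2$, after which (a)--(c) sort to $2<x\le y\le z$. In case $\tau>2$ the larger Vieta root is $\ge xy/2\ge 0$, so~(d) yields $z\ge 0$ and (a)--(c) sort to $0\le x\le y\le z$.

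For the main loop in case $\tau<2$: while $z>\tfrac12 xy$, the Vieta partner $\sigma=xy-z$ is strictly less than $z$, so the replacement-and-resort strictly decreases $x+y+z$; since the sum is bounded below by $6$, a descent argument on the algebraic surface $\Sigma_{\tau}\cap\{x,y,z>2\}$ — the orbit of any starting triple under the group generated by Vieta involutions and coordinate permutations is discrete and meets the compact region $\{x+y+z\le C\}$ in only finitely many points — forces termination in finitely many iterations. At exit, the negation of the loop condition together with the sort yields the chain $2<\tr(U)\le\tr(V)\le\tr(UV)\le\tfrac12\tr(U)\tr(V)$ required in~(a). In case $\tau>2$, the same Vieta jumps eventually push the smallest coordinate negative because $\Sigma_{\tau}\cap\mathbb{R}_{\ge 0}^{3}$ is bounded when $\tau>2$ and contains no fixed point of the iteration; after the cosmetic post-processing $(U',V')\mapsto(V',U'V')$, whose effect on the trace triple is computed via the power formula Prop.~\ref{prop:power}(a), one obtains the output triple with $0\le\tr(U)\le\tr(V)$ and $\tr(UV)<0$ demanded in~(b).

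The principal obstacle is the uniqueness assertion in case~(a). Suppose $(U,V)$ and $(\tilde U,\tilde V)$ are two reduced pairs, both Nielsen equivalent to $(A,B)$ and both satisfying the inequalities in~(a); both trace triples lie on $\Sigma_{\tau}$, and in both $z$ is the smaller Vieta root of the $z$-quadratic determined by $(x,y)$. The strategy is to show that $x+y+z$ attains its minimum on the Nielsen orbit precisely on such reduced triples and that this minimum is attained by a single sorted triple: every nontrivial Nielsen word acting on a reduced triple decomposes as coordinate permutations interleaved with at least one Vieta involution, and a Vieta involution applied to a reduced triple strictly increases $z$ (since $\sigma=xy-z\ge\tfrac12 xy\ge z$), hence strictly increases $x+y+z$. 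Consequently minimisers are unique up to permutation, and the sort demanded in~(a) pins the trace triple down completely. A careful rendition of this reduction-theoretic argument — in the spirit of~\cite[Satz~1]{Ros1} — will complete the proof.
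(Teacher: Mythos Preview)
Your approach differs from the paper's, which largely defers to~\cite{FR} for the technical content (Propositions~2 and~4 there for the inequality $xy-z<y$, Lemmas~2 and~3 for termination and uniqueness), sketching only the limit argument. You instead attempt a direct Markov-triple style descent. That is the right spirit, but the execution has genuine gaps.

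The most concrete error is in your termination argument for $\tau>2$: the claim that $\Sigma_\tau\cap\mathbb{R}_{\ge 0}^3$ is bounded is false. For example, with $\tau=10$ the triple $(3,10,z)$ with $z=\tfrac{1}{2}(30+\sqrt{512})\approx 26.3$ lies on $\Sigma_\tau$, and by increasing $y$ with $x=3$ fixed one pushes $z$ to infinity. So boundedness cannot supply termination here. The paper's sketched argument is different in kind: assuming infinitely many iterations, the coordinatewise-monotone sequence $(x_i,y_i,z_i)$ converges, and one shows the limit satisfies $x_\infty=2$ and $y_\infty=z_\infty$, whence the trace identity forces $\tau=2$, a contradiction. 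Your termination argument for $\tau<2$ has the analogous gap: you assert that the orbit is discrete, but a strictly decreasing bounded-below sum $x+y+z$ can still take infinitely many values, and discreteness of the orbit on $\Sigma_\tau$ is essentially equivalent to the termination claim itself, so it cannot be assumed. There is also a sign slip in the initialization: substituting $z=\pm 2$ into the surface equation gives $(x\mp y)^2=\tau-2<0$, not $2-\tau>0$, and this only excludes $z=\pm 2$; to get $z>2$ you need an extra step (e.g.\ the two Vieta roots in $z$ have product $x^2+y^2-\tau-2>4$ and the quadratic is positive at $z=2$, so both roots exceed~$2$). Your uniqueness sketch via minimality of $x+y+z$ is on the right track and matches the argument in~\cite{FR}, but as written it only treats the Vieta involution on the $z$-coordinate; a complete argument must show that every Vieta involution (on any coordinate) applied to a sorted reduced triple weakly increases the sum.
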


\begin{proof}
First we show that all steps can be executed. 
Proposition~\ref{prop:Replace}.a,b,c and $x\ge 0$, $y\ge 0$ 
imply that we can always achieve $0\le x\le y\le |z|$.
If necessary, we can switch the representative matrix of the first or second component
here which switches two signs.
In the case $\tau<2$, Propositions~\ref{prop:trcomminv} and~\ref{prop:traceprops}.f yield $x>2$.
Hence step~2 can be executed.

Next we have to verify that $xy-z<y$ in step~4, i.e., that we can arrange
$(x',y',z')$ in the desired way. For~$\tau<2$, this follows from~\cite[Prop.~2]{FR},
and for $\tau>2$, it is a consequence of~\cite[Prop.~4]{FR}.

In order to prove the finiteness of the algorithm, we have to show that
step~4 is performed only finitely often. This is shown in the proofs 
of Lemmas~2 and~3 of~\cite{FR}. Let us briefly sketch the idea. Starting from
$(x_1,y_1,z_1)=(x,y,z)$ with the trace triple from step~2, iterations
of step~4 yield a sequence $(x_i,y_i,z_i)$ with $x_{i+1}\le x_i$, $y_{i+1}
\le y_i$, and $z_{i+1}\le z_i$. If the termination condition is never met,
this sequence converges to a triple $(x_\infty, y_\infty, z_\infty)$.
Then one proves that this forces $y_\infty=z_\infty$ and $x_\infty=2$ which leads to a contradiction
in the formula $\tau = x_\infty^2 + y_\infty^2 + z_\infty^2 - x_\infty y_\infty z_\infty -2$.

The final item to check is the correctness of the algorithm, i.e., that~(a)
and~(b) are satisfied. This follows from the correctness of Proposition~\ref{prop:Replace}
and the termination conditions in line~(5). The additional claim is proven
in~\cite[Lemma~2]{FR}.
\end{proof}

\begin{remark}\label{rem:ComputeWords}
A simple extension of this algorithm keeps track of the Nielsen transformations
which are applied and computes words $u,v$ in the letters $A,B,A^{-1},B^{-1}$
such that pair of matrices $(U,V)$ which corresponds to the output of the Trace Minimization
Algorithm~\ref{alg:TraceMin} is $U=u(A,B,A^{-1},B^{-1})$ and $V=v(A,B,A^{-1},B^{-1})$.
\end{remark}

Let us illustrate the algorithm with a couple of examples.

\begin{example}\label{ex:NegTau}
Consider the trace triple $(87,6,507)$ which corresponds to
elements $A = \binom{44\;\;61}{31\;\;43}$ and
$B = \binom{3\;\;4}{2\;\;3}$ in $\PSLR$.
Let us follow the steps of the algorithm.
\begin{enumerate}
\item[(1)] $\tau = -2$

\item[(2)] $(x_1,y_1,z_1) = (6,87,507)$
corresp.\ to $(U_1,V_1,U_1V_1) = (B, A, BA)$.
 
\item[(4)] 
$(x_2,y_2,z_2)=(6,15,87)$ corresp.\ to $(U_2,V_2,U_2V_2)=(U_1^{-1},U_1V_1^{-1},V_1^{-1})$.
Here we have $U_2 = \binom{\;3\;\;-4}{-2\;\;\;3}$ and $V_2 = \binom{\;5\;\;-7}{-7\;\;10}$. 
 
\item[(4)] $(x_3,y_3,z_3)=(3,6,15)$ corresp.\ to $(U_3,V_3,U_3V_3)=(V_2 U_2^{-1}, U_2, V_2)$.
Here we have $U_3 = \binom{\;1\;\; -1}{-1\;\;\;2}$ and $V_3 = \binom{\;3\;\;-4}{-2\;\;\;3}$.

\item[(4)] $(x_4,y_4,z_4)=(3,3,6)$ corresp.\ to $(U_4,V_4,U_4V_4) = (U_3^{-1},U_3V_3^{-1},V_3^{-1})$.
Here we have $U_4 = \binom{2\;\;1}{1\;\;1}$ and $V_4 = \binom{1\;\;1}{1\;\;2}$.
 
\item[(5)] The next trace triple is $(3,3,3)$, where $V_5= U_4V_4^{-1} = \binom{\;3\;\;-1}{-1\;\;\;0}$ satisfies
$\tr(V_5)=3$. Since $3 < \frac{1}{2}\cdot 3\cdot 3 = \frac{9}{2}$,
the algorithm stops and returns $(3,3,3)$.
A corresponding pair of matrices is $(U_4^{-1},V_5) \simN (A,B)$.
\end{enumerate}
When we look at the last step, we see that also $U = \binom{2\;\;1}{1\;\;1}$
and $V = \binom{1\;\;1}{1\;\;2}$ define a pair $(U,V) \simN (A,B)$ which satisfies
$2\le \tr(U)=3\le \tr(V)=3 \le \tr(UV)=3 \le \frac{1}{2}\, \tr(U)\,\tr(V) = \frac{9}{2}$.
\end{example}

In the second example we encounter the case $\tau>2$.

\begin{example}\label{ex:PosTau}
Consider the triple $(26,53,5/2)$ which corresponds to the pair $(A,B)$
given by $A = \binom{26\;\;-1}{\;1 \;\;\;\; 0}$ and 
$B = \binom{\;0 \;\;\;\;\;2}{-1/2\;\; 53}$ in $\PSLR$.
Let us follow the steps of the algorithm.
\begin{enumerate}
\item[(1)] $\tau = 44.25$

\item[(2)] $(x_1,y_1,z_1)=(2.5,26,53)$ corresp.\ to $(U_1,V_1,U_1V_1)=(B^{-1}A^{-1},A,B^{-1})$.
           Here we have $U_1 = \binom{2\;\;\;\; 1}{\;0 \;\; 1/2}$ and $V_1=\binom{26\;\;-1}{\;1 \;\;\;\; 0}$.

\item[(4)] $(x_2,y_2,z_2) = (2.5,12,26)$ corresp.\ to
           $(U_2,V_2,U_2V_2)=(U_1^{-1}\!,U_1V_1^{-1}\!,V_1^{-1})$.
           Here we have $U_2 = \binom{1/2\;\;-1}{\;0 \;\;\;\; 2}$
           and $V_2 = \binom{\;-1\;\; 28}{-1/2\;\; 13}$.

\item[(4)] $(x_3,y_3,z_3) = (2.5,4,12)$ corresp.\ to
           $(U_3,V_3,U_3V_3)=(U_2^{-1},U_2V_2^{-1},V_2^{-1})$. 
           Here we have $U_3 = \binom{2\;\;\;\; 1}{\;0 \;\; 1/2}$
           and $V_3 = \binom{6\;\,-13}{1\;\;\;\;-2}$.

\item[(4)] $(x_4,y_4,z_4) = (-2,2.5,4)$ corresp.\ to $(U_4,V_4,U_4V_4) = (V_3U_3^{-1},U_3,V_3)$.
           Here we have $U_4 = \binom{\;3\;\;-32}{1/2\;\;-5}$
           and $V_4 = \binom{2\;\;\;\; 1}{\;0 \;\; 1/2}$.

\item[(5)] Return the tripel $(5/2,4,-2)$ which corresponds to 
           $(V_4,(U_4V_4)^{-1}) = ( \binom{2\;\;\;\; 1}{\;0 \;\; 1/2},\, 
           \binom{-2 \;\; 13}{-1 \;\;\;6})$.
\end{enumerate}

Thus the matrices $U= \binom{2\;\;\;\; 1}{\;0 \;\; 1/2}$ and  
$V = \binom{-2 \;\; 13}{-1 \;\;\;6})$ satisfy $(U,V) \simN (A,B)$ and
$0 \le \tr(U) = 5/2 \le \tr(V) = 4$ as well as $\tr(UV) = -2 < 0$.
\end{example}

Next we introduce the Extended Trace Minimization Algorithm. 
If we do not yet know whether a given subgroup of $\PSLR$ is discrete
or whether a given elliptic element $A\in \PSLR$ has finite order, we have to check
algorithmically whether $\tr(A)$ is of the form $\tr(A)=2\,\cos(p\,\pi/q)$ with
a rational number $0< p/q \le 1$. Since we assumed that $\tr(A)$ is given as an algebraic
number, we can use the following Algorithm~\ref{alg:RatAngle}. Here $\Phi(n)$ denotes
Euler's totient function.

\begin{algorithm}[ht]
  \DontPrintSemicolon
  \SetAlgoLongEnd
  \SetKwInOut{Input}{Input}
  \SetKwInOut{Output}{Output}

  \Input{An algebraic number $0\le x < 2$ given by its minimal polynomial $P(t)\in \mathbb{Q}[t]$
  and an isolating interval.}
  \Output{A pair $(p,q)$ with $q\ge 2$, $1\le p\le q-1$, $\gcd(p,q)=1$ if $x=2\,\cos(p\,\pi/q)$,
  and $(0,0)$ otherwise.}
  \BlankLine
  [Optional.] Use Sturm's Theorem to compute the number $\varrho$ of real roots of
  $P(t)$ in $[-2,2]$. If $\varrho\ne \deg(P(t))$ then \Return $(0,0)$.\;
  Compute all numbers $q_1,\dots,q_k$ such that $q_i\ge 2$ and $\Phi(2q_i)=2\,\deg(P(t))$.\;
  \For{$i=1,\dots,k$}{   
    \If{$P(t)$ {\rm divides} $S_{q_i}(t)$}{
    Compute the isolating interval of $p = q\, \arccos(x/2) / \pi$, and let $p$
    be the unique integer in this interval.\;
    \Return $(p,q_i)$.\;
    }
  }
  \Return $(0,0)$.\;
  \caption{The Rational Angle Recovery Algorithm}
  \label{alg:RatAngle}
\end{algorithm}

\begin{theorem}[{\bf Rational Angle Recovery Algorithm}]\label{thm:RatAngleAlg}\, \\
Let $0\le x <2$ be an algebraic number which is given by its minimal polynomial $P(t)\in\mathbb{Q}[t]$
and an isolating interval. Then Algorithm~\ref{alg:RatAngle} checks whether~$x$ is of the form
$x = 2\,\cos (p\,\pi/q)$ with a rational number $0< p/q \le 1/2$ and returns the pair $(p,q)$
with $q\ge 2$, $1\le p\le q-1$, $\gcd(p,q)=1$ in that case. If~$x$ is not of this form, the
algorithm returns $(0,0)$.
\end{theorem}

\begin{proof}
First of all, let us check the correctness of the optional step~1. 
As is well-known, a number of the form $x=2\,\cos(p\,\pi/q)$ is totally real and
all of its conjugates are in the interval $[-2,2]$ by a theorem of L.~Kronecker
(cf.~\cite{Pra}, Thm.~4.5.4). Hence Sturm's Theorem correctly checks if all zeros 
of $P(t)$ are in this interval.

Next we show that the algorithm is finite. The number $y=e^{ip\,\pi/q}$ is a primitive 
$2q$-th root of unity if $p$ is odd and a primitive $q$-th root of unity if $p$ is even.
Since $x=y+y^{-1}$, the field extension $\mathbb{Q}[y]/\mathbb{Q}[x]$ has degree~2, and thus
$\deg(P(t))= \Phi(2q)/2$. (Notice that $\Phi(2q)=\Phi(q)$ if $q$ is odd.)
Hence step~2 correctly determines all possible values of~$q$, and since $\Phi$ grows to infinity,
there are only finitely many such values.

Now we observe that $2\,\cos(p\,\pi/q)$ is a zero of~$S_{2q}(t)$ by Proposition~\ref{prop:SnRules}.a.
Hence $P(t)$ has to divide $S_{2q}(t)$ if~$x$ is of the desired form. If no polynomial $S_{2q_i}(t)$
is divisible by $P(t)$ for the possible values $q_1,\dots,q_k$ of~$q$, then~$x$ cannot be of the desired form.
This proves that the algorithm finds the correct value of the denominator~$q$ if it exists.
The unique value of~$p$ is then found using its isolating interval in step~5. 
\end{proof}

In this algorithm, the isolating interval of~$x$ has to be small enough, of course.
If it turns out that it isn't, we may have to shorten it using a root finding algorithm for~$P(t)$
such as Newton's method.

A further crucial ingredient for the Extended Trace Minimization Algorithm is 
the following normalization step.

\begin{proposition}[The Normalization Step]\label{prop:NormStep}\, \\
Let $U,V\in \PSLR$ such that $0<\tr(U)<2$ and $\tr(V)> 0$.
Assume that~$U$ has finite order and write $\tr(U) = 2 \cos(p\pi / q)$
with $q = \ord(U)\ge 3$ and $1\le p \le q-1$ such that $\gcd(p,q)=1$.
Write $rp-sq=1$ with $2\le r <q$ and $s\ge 1$. Let $U'=U^r$ and consider
the pair $(U',V)$.
\begin{enumerate}
\item[(a)] $\langle U',V \rangle = \langle U,V\rangle$

\item[(b)] Let $(x,y,z) = (\tr(U),\tr(V),\tr(UV))$ be the trace triple of~$(U,V)$.
Then the trace triple of $(U',V)$ is
$$
( x\, S_r(x) - 2\, S_{r-1}(x),\; y,\; S_r(x)\, z - S_{r-1}(x)\,y).
$$

\item[(c)] $\tr(U')= 2 \cos(\pi/q)$ and $q=\ord(U')$

\item[(d)] If $\tr([U,V])>2$ then $2<\tr([U',V]) \le \tr([U,V])$

\end{enumerate}
\end{proposition}

\begin{proof}
Note that $0<\tr(U)<2$ implies $\ord(U)\ge 3$. An elliptic element of some order $q\ge 3$
in $\PSLR$ is conjugate to a rotation by an angle $\frac{p\,\pi}{q}$, where
$1\le p \le q-1$ and $\gcd(p,q)=1$. Thus the trace of~$U$ is $2\cos(p\,\pi/q)$.

Claim~(a) follows from $(U')^p = U^{1+sq} = U$. The formula for $\tr(U')$ in~(c) is a
consequence of Proposition~\ref{prop:power}.a, and the formula for $\tr(U'V)=\tr(U^rV)$
was shown in Proposition~\ref{prop:power}.d.

Next, we note that claim~(c) follows from the observation that $U'$ is conjugate to the
$r$-th power of the rotation which is conjugate to~$U$.

Hence it remains to prove~(d). Since Proposition~\ref{prop:MoreSnProps}.d yields 
$$
\tr([U^r,V]) -2 \;=\; 
S_r(\tr(U))^2 \, (\tr([U,V])-2),
$$
it suffices to show that $S_r(\tr(U))^2 \le 1$. Here we apply Proposition~\ref{prop:SnRules}.a and get
$$
S_r(\tr(U)) \;=\; \tfrac{\sin(rp\,\pi / q)}{\sin(p\,\pi/q)} \;=\; \tfrac{(-1)^s \sin(\pi/q)}{\sin(p\,\pi/q)} 
$$
where the second equality follows from $rp = 1+sq$. Finally, we note that $1\le p \le q-1$ implies
$\sin(p\,\pi/q)\ge \sin(\pi/q)$ and obtain $S_r(\tr(U))^2 \le 1$, as desired.
\end{proof}

The passage from $(U,V)$ to $(U',V)$ in this proposition will be called a 
\textbf{normalization step}. More generally, a passage from a pair $(U,V)$ to
a pair $(U^r,V)$ has sometimes been called an \textbf{E-transformation} or an \textbf{extended 
Nielsen transformation}. If $p=1$, i.e., if $\vert \tr(U)\vert = 2\cos(\pi/q)$, we say that~$U$
(or $\tr(U)$) is \textbf{normalized}.

\begin{remark}
Let $U\in \PSLR$ be an elliptic element of finite order, and let $x = \vert \tr(U)\vert$. 
Then we can compute the exponent $r\ge 1$ such that $U^r$ is normalized as follows.
\begin{enumerate}
\item[(1)] Apply Algorithm~\ref{alg:RatAngle} to get a pair $(p,q)$. 

\item[(2)] If $p=1$ then return $r=1$.

\item[(3)] If $p>1$ then apply the Extended Euclidean Algorithm and find $r\ge 1$ and $s\ge 0$ such that $rp-sq=1$.
Return~$r$.
\end{enumerate}
Below we assume that a function ${\tt NormExp}(x)$ has been implemented which computes~$r$ if $p>1$.
\end{remark}

To simplify the presentation, we assume that, whenever we permute elements of the triple $(x,y,z)$, 
we choose the matrices whose traces are the new numbers~$x$ and~$y$ in such a way that $x\ge 0$ and $y\ge 0$. 

\begin{algorithm}[H]
  \DontPrintSemicolon
  \SetAlgoLongEnd
  \SetKwInOut{Input}{Input}
  \SetKwInOut{Output}{Output}

  \Input{A trace triple $(\tr(A),\tr(B),\tr([A,B]))$ of $(A,B)\in\PSLR^2$ such that 
  $\langle A,B\rangle$ contains elliptic elements, and such that $\tau=\tr([A,B])> 2$.}
  \Output{{\tt "not discrete"} or a triple $(x,y,z)$.}
  \BlankLine
  Using Algorithm~\ref{alg:TraceMin}, compute $(x,y,z)$ with $0\le x\le y$, $-2<z<0$.\;
  Permute $x,y,z$ such that $0\le x\le y \le \vert z\vert$.\;
  Apply Algorithm~\ref{alg:RatAngle} to~$x$ and get a pair $(p,q)$.\;
  \lIf{$(p,q)=(0,0)$}{\Return {\tt "not discrete"}.}  
  \If{$p>1$}{
    $r:={\tt NormExp}(x)$\;
    Let $x'=x\,S_r(x) - 2 S_{r-1}(x)$, $y'=y$, $z' = S_r(x)z - S_{r-1}(x)y$, and replace
    $(x,y,z)$ by $(x',y',z')$.\;
  } 
  \lIf{$\vert z\vert < y$}{Interchange $y$ and $z$.}
  \uIf{$y\ge 2$ and $z\ge 2$}{
  Replace $(x,y,z)$ by $(x,y,xy-z)$ and continue with step~2.\;}
  \lElseIf{$y\ge 2$ and $z\le -2$}{\Return $(x,y,z)$.}
  
  Apply Algorithm~\ref{alg:RatAngle} to~$y$ and get a pair $(p',q')$.\;
  \lIf{$(p',q')=(0,0)$}{\Return {\tt "not discrete"}.}  
  \If{$p'>1$}{
    $r':={\tt NormExp}(y)$\;    
    Let $x'=x$, $y' = y\,S_{r'}(y) - 2 S_{r'-1}(y)$, $z'= S_{r'}(y)z - S_{r'-1}(y)x$,
    and replace $(x,y,z)$ by $(x',y',z')$.\;
  }
 \lIf{$y<x$}{Interchange $x$ and $y$.}
 \uIf{$z\ge 2$}{
   Replace $(x,y,z)$ by $(x,y,xy-z)$ and continue with step~2.\;}
 \ElseIf{$\vert z\vert <2$}{
    Apply Algorithm~\ref{alg:RatAngle} to~$\vert z\vert$ and get a pair $(p'',q'')$.\;
    \lIf{$(p'',q'')=(0,0)$}{\Return {\tt "not discrete"}.}
    \lIf{$p''>1$ {\rm and} $\vert z\vert < y$}{Continue with step~2.}   
 }
 \Return $(x,y,z)$.\; 
  \caption{The Extended Trace Minimization Algorithm}
  \label{alg:ExtTraceMin}
\end{algorithm}

\begin{theorem}{\bf (Extended Trace Minimization Algorithm)}\label{thm:ExtTraceMin}\,\\
Let $A,B\in \PSLR$ be such that $\langle A,B\rangle$
contains elliptic elements and $\tr([A,B])>2$. Then Algorithm~\ref{alg:ExtTraceMin} 
returns {\tt "not discrete"} or computes
a trace triple $(x,y,z)$ of a pair $(U,V)\in \PSLR^2$ such that the following conditions
are satisfied.
\begin{enumerate}
\item[(a)] The pair $(U,V)$ is obtained from $(A,B)$ by a series of Nielsen transformations 
and extended Nielsen transformations.

\item[(b)] The resulting triple $(x,y,z)$ satisfies $0\le x\le y$.

\item[(c)] If $y\ge 2$ then $z\le -2$.

\item[(d)] The element~$x$ is of the form $x = 2 \cos(\pi/p)$ for some $p\ge 2$.

\item[(e)] If $y<2$ then $y$ is of the form $y = 2 \cos(\pi/q)$ for some $q\ge p$.

\item[(f)] If $y<2$ then $z\le -2$ or $\vert z\vert \ge y$ and 
$\vert z\vert = 2\,\cos( r\,\pi/s)$ with $s\ge 2$, with $1\le r\le s-1$, and with $\gcd(r,s)=1$. 

\item[(g)] $2 < \tr([U,V]) \le \tr([A,B])$

\end{enumerate}
\end{theorem}

\begin{proof}
First we show that the algorithm can always be executed and is finite. Since $\langle A,B\rangle$
is not free, the element $z=\tr(UV)$ computed in step~1 satisfies $-2< z<0$
by Theorem~\ref{thm:classify}.
Hence step~2 can be executed and yields a triple with $0\le x < 2$. If step~3 yields
$(\bar{p},\bar{q})\ne (0,0)$, the corresponding matrix has a finite order $\bar{q}\ge 2$ and~$x$ 
is of the form required to perform the
normalization in steps~5-8. Note that~$y$ is not changed here. 

If $y\ge 2$ and $z\ge 2$ in step~10, we perform a trace minimizing step and
continue with step~2. It follows as in the usual trace minimizing algorithm 
that the condition $z\ge 2$ can happen only finitely many times.

If $y\ge 2$ and $z\le -2$, the algorithm stops with the output $(x,y,z)$.
Thus we have $0\le y<2$ in step~13.
If step~13 yields $(p',q')\ne (0,0)$ then
the corresponding matrix has finite order, and we can perform 
the normalization of~$y$ in steps~15-18. Notice that~$x$ is not changed here.

If the algorithm outputs {\tt "not discrete"}, this is clearly correct, as the
elliptic element with this trace has infinite order.
In view of step~19, we have $0\le x\le y<2$ in step~20.

If $z\ge 2$ here, we perform a trace minimizing step and continue
with $(x,y,xy-z)$ in step~2. Again, it follows as in the usual trace minimizing
algorithm that this can happen only finitely many times.
If we have to return to step~2 from step~25, a normalization step has to
be applied to~$z$. This can happen only finitely many times because 
of Proposition~\ref{prop:NormStep}.d.

It remains to show that the resulting triple $(x,y,z)$ has the claimed properties.

Claim~(a) holds, because all changes to the tuple $(x,y,z)$ correspond to Nielsen equivalences or 
normalization steps. Claim~(b) holds if we stop in step~12, because at this point
$0\le x<2$ and $y\ge 2$. If we stop in step~26, claim~(b) is a consequence of
step~19.

To show~(c), we note that we must have stopped in step~12 and the claim follows.
Claim~(d) is a consequence of the normalization in steps~5-8, and of 
the one in steps~15-18 if~$x$ and~$y$ were interchanged in step~19.
To verify~(e), we note that~$y$ must have been output in step~27, and then the
claim follows from the normalizations of~$x$ and~$y$ in steps~5-8 and~15-18. 

Claim~(f) follows, since the loop condition guarantees $z<2$, and
Algorithm~\ref{alg:RatAngle} yields the claim if $-2<z<2$.

Finally, we prove~(g). 
By Proposition~\ref{prop:trcomminv}, the Nielsen reduction steps underlying
steps 2,9,11,19,21 do not change the trace of the commutator of the matrices corresponding to~$x$ and~$y$
by Proposition~\ref{prop:trcomminv}. Initially, this number is $\tr([A,B])>2$.  
The normalizations in steps~5-8 and (if applicable) in steps~15-18 decrease it, 
but keep it larger than~2 by Proposition~\ref{prop:NormStep}.d.
\end{proof}

\begin{remark}
As for the usual Trace Minimization Algorithm, it is possible to include the 
construction of words $u,v$ in the letters $A,B,A^{-1}, B^{-1}$ 
such that the resulting trace triple $(x,y,z)$ of Algorithm~\ref{alg:ExtTraceMin}
corresponds to the pair $(U,V)$ given by $U=u(A,B,A^{-1},B^{-1})$
and $V=v(A,B,A^{-1},B^{-1})$.

Moreover, we note that in an actual implementation of Algorithm~\ref{alg:ExtTraceMin},
one would certainly include the fractions corresponding to the computed rational angles
in the output.
\end{remark}

%
%

\section{Roots Generating Free Fuchsian Groups of Rank 2}
\label{sec4}

In this section we prove Theorem~\ref{thm:free}.
For this purpose, we first recall a classification of free Fuchsian groups of rank~2
from~\cite{Ros1}. The papers~\cite{Ros2,FR} provide an even more detailed classification of all
generating pairs of 2-generated Fuchsian groups. 

Recall that a discrete free subgroup of~$\PSLR$ contains no elliptic elements.
This follows from the facts that a free group contains no elements of finite order and
that the existence of elliptic elements of infinite order contradicts discreteness
(e.g., see~\cite[Thm.~8.4.1]{Bea1}).

\begin{theorem}[\textbf{Classification of 2-Generated Free Fuchsian Groups}]$\mathstrut$\label{thm:classify}
Let $A,B \in \PSLR$. Then $\langle A,B\rangle$ is a free Fuchsian group of rank~2
if and only if one of the following two cases occurs.
\begin{enumerate}
\item[(a)] $\tr([A,B]) \le -2$

\item[(b)] There exists a Nielsen transformation from $(A,B)$ to a pair $(U,V)$
of matrices $U,V\in \PSLR$ with $2 \le \tr(U) \le \tr(V)$ and $\tr(UV) \le -2$.

\end{enumerate}
Moreover, in case~(b) we have $\tr([A,B]) \ge 18$.
\end{theorem}

\begin{proof}
This follows from~\cite[Satz~1]{Ros1}. Notice that
a free product of two infinite cyclic groups is isomorphic to a free group
of rank~2. Furthermore, recall that an element~$A$ of infinite order in
a discrete subgroup of~$\PSLR$ satisfies $\vert \tr(A) \vert \ge 2$, since elliptic 
elements in such groups have finite order (e.g., see~\cite[Thm.~8.4.1]{Bea1}).
The fact that the pair $(U,V)$ in~(b)
is obtained from $(A,B)$ by a Nielsen transformation is shown in the proof 
of~\cite[Satz~1]{Ros1}. 

Finally, in case~(b) we have $\tr([A,B]) = \tr([U,V])$ by Proposition~\ref{prop:trcomminv} and
$$
\tr([U,V]) = \tr(U)^2 + \tr(V)^2 + \tr(UV)^2 
- \tr(U)\tr(V)\tr(UV) - 2 \ge 18.
$$
by Proposition~\ref{prop:traceprops}.d.
\end{proof}

In the following, let $A,B,R,S\in \PSLR$ be elements such that $R^m=A$ and $S^n=B$ for 
some $m,n\in\mathbb{N}_+$. Recall that we chose representatives such that $\tr(A), \tr(B),
\tr(R)$, and $\tr(S)$ are non-negative.
Based on the above classification, it is straightforward to prove Theorem~\ref{thm:free}
as follows.

\begin{proof}[{\bf Proof of Theorem~\ref{thm:free}}]
Since we assumed that $\langle A,B\rangle$ is a free Fuchsian group of rank~2,
Theorem~\ref{thm:classify} allows us to distinguish two separate cases 
in which we can check whether $\langle R,S\rangle$ is a free Fuchsian group of rank~2.

\medskip
\noindent{\bf Case F1:} $\tr([A,B])\le -2$.
By Proposition~\ref{prop:traceprops}.f, we have $\tr(A) >2$ and $\tr(B) >2$.
Then we can write $\tr(A)=2\,\cosh(\phi)$ with $\phi>0$, and hence
$\tr(R)= 2\,\cosh(\phi/m)>2$. Similarly, $\tr(S)>2$.
Proposition~\ref{prop:power}.c yields
\begin{align*}
\tr([A,B]) - 2 &\;=\; \tr([R^m,S^n]) - 2 &\\
&\;=\; S_m(\tr(R))^2 \, S_n(\tr(S))^2 \, (\tr([R,S]) -2), 
\end{align*}
and therefore 
$$
\tr([R,S]) = \frac{\tr([A,B]) -2}{S_m(\tr(R))^2 \, S_n(\tr(S))^2} + 2 < 2.
$$
Notice that $S_m(\tr(R))\ne 0$ and $S_n(\tr(S))\ne 0$ by Proposition~\ref{prop:root}.a.

Now Theorem~\ref{thm:classify} and $\tr([R,S])<2$ imply that $\langle R,S\rangle$
is a free Fuchsian group of rank~2 if and only if $\tr([R,S])\le -2$.
This is equivalent to 
$$
\frac{\tr([A,B]) -2}{S_m(\tr(R))^2 \, S_n(\tr(S))^2} \;\le\; -4
$$
and hence to $S_m(\tr(R))^2\, S_n(\tr(S))^2 \le \frac{1}{2} - \frac{1}{4}\, \tr([A,B])$, as claimed.

\medskip
\noindent{\bf Case F2:} As~$A$ and~$B$ have infinite order in a discrete subgroup of~$\PSLR$, 
we have $\tr(A) \ge 2$ and $\tr(B) \ge 2$. As above, it follows that $x=\tr(R)\ge 2$ and $y=\tr(S)\ge 2$.
By Proposition~\ref{prop:root}.a, it follows that $S_m(x)\ne 0$ and $S_n(y)\ne 0$.
By Proposition~\ref{prop:power}.c, we get $\tr([R,S]-2 = \frac{\tr([A,B]-2}{S_m(x)^2\, S_n(x)^2} > 0$. 

Hence we are in the case $\tau = \tr([R,S]) > 2$ of the Trace Minimalization
Algorithm\ref{alg:TraceMin}. It computes the trace triple $(x',y',z')$ of a pair $(U,V)$ of matrices 
in~$\PSLR$ which is Nielsen equivalent to~$(R,S)$
and satisfies $0\le x' \le y'$ as well as $z'<0$. 

Now, if the group $\langle R,S\rangle$ is a free Fuchsian group of rank~2 then
it contains no elliptic elements, and therefore $2\le x'\le y'$ as well as $z'\le -2$.

Conversely, if $2\le x'\le y'$ and $z'\le -2$ then $\langle U,V\rangle$ is a
free Fuchsian group of rank~2 by Theorem~\ref{thm:classify}.a. Thus the observation that
$\langle U,V\rangle = \langle R,S\rangle$ finishes the proof.
\end{proof}

A rather particular case occurs when we start
with two matrices $A,B \in \PSLR$ such that $\vert \tr(A)\vert = \vert \tr(B) \vert = 
\vert \tr(AB) \vert =2$. A similar case was studied in~\cite{Bea2}.
In our setting, we obtain the following result.

\begin{corollary}\label{cor:parabolic}
Let $A,B\in \PSLR$ be parabolic elements which generate a free Fuchsian group of rank~2.
Assume that their product $AB$ is parabolic, too.
Let $m,n\in\mathbb{N}_+$, and let $R,S\in \PSLR$ be such that $A=R^m$ and $B=S^n$.

Then the group $\langle R,S\rangle$ is a free Fuchsian group of rank~2 if and only if $m=n=1$.
\end{corollary}

\begin{proof}
As usual, we represent~$A$ and~$B$ by matrices in~$\SLR$ such that $\tr(A)=2$
and $\tr(B)=2$ are positive. Then $\tr(AB)=2$ is impossible, since this
would imply 
$$
\tr([A,B]) \;=\; \tr(A)^2 + \tr(B)^2 + \tr(AB)^2 - \tr(A)\tr(B)\tr(AB) - 2 = 2
$$
by Proposition~\ref{prop:traceprops}.d. Here $\langle A,B\rangle$ would be metabelian and
thus not a free Fuchsian group of rank 2.

Consequently, we have $\tr(AB)=-2$. By Theorem~\ref{thm:CompTraceTriple}.a, we have
$\tr(R) = \tr(S) =2$. Hence Proposition~\ref{prop:SnRules}.f yields $S_m(\tr(R))=m$ 
and $S_n(\tr(S)) =n$. Plugging these values into
the formula in Proposition~\ref{prop:root}.d yields $\tr(RS) = 2 - \frac{4}{mn} < 2$.
In the current situation, $\langle R,S\rangle$ is a free Fuchsian group of rank~2 if and only
if $\tr(RS) \le -2$. This is equivalent to $m=n=1$, as claimed.
\end{proof}

To conclude this section, we illustrate Theorem~\ref{thm:free} with a concrete example.

\begin{example}
Suppose we are given the trace triple $(194,2627658,61714)$. For instance, it corresponds to
the pair $(A,B)$ given by $A = \left( \begin{smallmatrix} -1 & 28 \sqrt{6} + 70 \\ 28 \sqrt{6}-70 & 195
\end{smallmatrix} \right)$ and $B= \left( \begin{smallmatrix} 2627796 & -19403 \\ 19403 & -138
\end{smallmatrix} \right)$ in $\PSLR$. Using Theorem~\ref{thm:classify}, 
we can check that $\langle A,B\rangle$ is a free Fuchsian group of rank~2.

Now we let $(R,S) \in \PSLR^2$ such that $R^2=A$ and $S^3=B$. We want to check 
whether $\langle R,S\rangle$ is a free Fuchsian group of rank~2. 

First we compute the trace triple of $(R,S)$ using Theorem~\ref{thm:CompTraceTriple}.
We get $(x,y,z)=(14, 138, 10)$. Then we calculate
$\tau=\tr([R,S]) = 18$ and conclude that we need to apply the Trace Minimizing Algorithm~\ref{alg:TraceMin}
in the case $\tau>2$. Let us follow the steps.
\begin{enumerate}
\item[(0)] $(x_0,y_0,z_0) = (14,138,10)$
\item[(1)] $(x_1,y_1,z_1) = (10,14,138)$
\item[(2)] $x_1 y_1 - z_1 = 2$ and $(x_2,y_2,z_2) = (2,10,14)$
\item[(3)] $x_2 y_2 - z_2 = 6$ and $(x_3,y_3,z_3) = (2,6,10)$
\item[(4)] $x_3 y_3 - z_3 = 2$ and $(x_4,y_4,z_4) = (2,2,6)$
\item[(5)] $x_4 y_4 -z_4 = -2$ and $(x_5,y_5,z_5) = (-2,2,2)$
\item[(6)] The algorithm returns $(x',y',z') = (2,2,-2)$.
\end{enumerate}
Altogether, we find that a pair $(R,S)$ with $R^2=A$ and $S^3=B$ generates a free 
Fuchsian group of rank~2.

For instance, for the given elements $A,B$, this pair is given by $R= \left(\begin{smallmatrix}
0 & 2\sqrt{6}+5\\ 2\sqrt{6}-5 & 14 \end{smallmatrix}\right)$ and $S= \left( \begin{smallmatrix}
138 & -1\\ 1 & 0 \end{smallmatrix}\right)$.
\end{example}

%
%

\section{Roots Generating Fuchsian Groups with Torsion}
\label{sec5}

In this section we treat the case of roots of 2-generated Fuchsian groups
which generate Fuchsian groups with torsion.

More precisely, we let $A,B,R,S \in \PSLR$ such that $\langle A,B\rangle$
is a Fuchsian group and $R^m=A$, $S^n=B$ for some $m,n\in\mathbb{N}_+$.
Recall that $\tr([A,B]\ne 2$, as $\langle A,B\rangle$ is non-elementary.
When we ask the question whether $\langle R,S\rangle$ is a discrete group,
we may distinguish the following cases:

\smallskip
\noindent{\bf Case I:} $-2 < \tr([A,B]) < 2$, i.e., the commutator $[A,B]$ is an
elliptic element. In this case, Proposition~\ref{prop:traceprops}.f implies $\tr(A)>2$
and $\tr(B)>2$.

\smallskip
\noindent{\bf Case II:} $\tr([A,B])>2$, $\tr(A)\ge 2$, $\tr(B)\ge 2$,
and $\langle A,B\rangle$ is not free.

\smallskip
\noindent{\bf Case III:} $\tr([A,B])>2$, $0\le \tr(A)<2$.

\smallskip
\noindent{\bf Case IV:} $\langle A,B\rangle$ is free.
\medskip

Notice that $\langle A,B\rangle$ is not free in the first three cases.
The classification of the possible presentations (or signatures) of 2-ge\-ner\-a\-ted Fuchsian
groups is a classical result (see~\cite{Kna}, \cite{Pur2}, \cite{Ros2}). For the proofs of 
our theorems, we need detailed versions (as given in~\cite{Kna}, \cite{Ros2}, \cite{FR}) which we
recall for the convenience of the readers.

\bigskip
\noindent{\bf Case I (Elliptic Commutator):} Assume that $-2 < \tr([A,B]) <2$. For this case, 
the following classification result was shown in~\cite[Theorem 3]{Ros2}.

\begin{theorem}[2-Generated Fuchsian Groups with Elliptic Commutator]\label{thm:ClassifyElliptic}\, \\
If $-2 < \tr([A,B]) <2$, the group $G=\langle A,B\rangle$ is discrete if and only if one of the
following cases occurs.
\begin{enumerate}
\item[(a)] $\tr([A,B]) = -2\cos(\pi/p)$ for some $p\ge 2$.

\item[(b)] $\tr([A,B]) = -2\cos(2\pi/p)$ for some odd number $p\ge 3$.

\item[(c)] $\tr([A,B]) = -2\cos(6\pi/r)$ for some $r\ge 7$ such that $\gcd(r,6)=1$,
and the Trace Minimization Algorithm~\ref{alg:TraceMin}, applied to the trace triple
of $(A,B)$, yields a trace triple $(\tr(U),\tr(V),\tr(UV))$ such that $\tr(U)=\tr(V)=\tr(UV)$.

\item[(d)] $\tr([A,B]) = -2\cos(4\pi/r)$ for some odd number $r\ge 5$,
and the Trace Minimization Algorithm~\ref{alg:TraceMin}, applied to the trace triple
of $(A,B)$, yields a trace triple $(\tr(U),\tr(V),\tr(UV))$ such that $\tr(U)=\tr(V)$
and such that $\tr(UV)= \frac{1}{2} \tr(U)^2$.

\item[(e)] $\tr([A,B]) = -2\cos(3\pi/r)$ for some $r\ge 4$ with $\gcd(r,3)=1$,
and the Trace Minimization Algorithm~\ref{alg:TraceMin}, applied to the trace triple
of $(A,B)$, yields a trace triple $(\tr(U),\tr(V),\tr(UV))$ such that $\tr(U)=\tr(V)=\tr(UV)$.

\item[(f)] $\tr([A,B]) = -2\cos(4\pi/7)$,
and the Trace Minimization Algorithm~\ref{alg:TraceMin}, applied to the trace triple
of $(A,B)$, yields a trace triple $(\tr(U),\tr(V),\allowbreak \tr(UV))$ such that $\tr(U)+1=\tr(V)=\tr(UV)$.

\end{enumerate}
\end{theorem}

In this theorem, case~(a) yields a one-cone torus group. Case~(b) yields
the amalgamated free product of $H_1 = \langle s_1,s_2 \mid (s_1)^2 = (s_2)^2\rangle$ 
and $H_2 = \langle s_3, s_4 \mid (s_3)^2 = (s_4)^p\rangle$ with cyclic 
amalgamated subgroup $A = \langle s_1 s_2\rangle = \langle s_3 s_4\rangle$.
Case~(c) yields a $(2,3,r)$-triangle group. Case~(d) yields
a $(2,4,r)$-triangle group. Case~(e) yields a $(3,3,r)$-triangle group. Finally, cases~(e) and~(f) yield
$(2,3,7)$-triangle groups. A check of the various cases shows that the following bound holds. 

\begin{corollary}\label{cor:TrLess1}
For a 2-generated Fuchsian group with $-2< \tr([A,B]) < 2$, we have $-2 < \tr([A,B]) \le 1$.
\end{corollary}

Next we write $\tr(A)=2\cosh(\phi_1)$ and $\tr(B)=2\cosh(\phi_2)$ with $\phi_1,\phi_2\in\mathbb{R}$.
Given $m,n\in\mathbb{N}_+$ and elements $R,S \in \PSLR$ such that $R^m=A$ and $S^n=B$,
we have $\tr(R)=2\cosh(\phi_1/m)$ and $\tr(S)=2\cosh(\phi_2/n)$. Thus~$R$ and~$S$ are hyperbolic elements
and the group $\langle R,S\rangle$ contains elliptic elements, because $\langle A,B\rangle$ does.
In this setting we have the following theorem.

\begin{theorem}[{\bf Case I}]\label{thm:EllipticComm}\, \\
Let $A,B\in\PSLR$ such that $\langle A,B \rangle$ is a Fuchsian group. Assume that 
$-2 < \tr([A,B]) < 2$ and that $R,S\in \PSLR$ satisfy $R^m=A$, $S^n=B$ for some $m,n\ge 1$.
Then the group $\langle R,S\rangle$ is discrete if and only if $m=n=1$.
\end{theorem}

\begin{proof} By Proposition~\ref{prop:traceprops}.f, we know that $A,B$ are hyperbolic elements.
Hence also $R,S$ are hyperbolic and Proposition~\ref{prop:power}.c yields
$$
\tr([R,S]) \;=\; 2 \;+\; \frac{\tr([A,B]) - 2}{S_m(\tr(R))^2\, S_n(\tr(S))^2} < 2
$$
Since $\langle R,S\rangle$ is not free, we also have $\tr([R,S]) > -2$ by Theorem~\ref{thm:classify}.

Consequently, we can apply Theorem~\ref{thm:ClassifyElliptic} to the group $\langle R,S\rangle$,
and Corollary~\ref{cor:TrLess1} shows $\tr([R,S]) \le 1$. Thus the above formula for $\tr([R,S])$
yields
$$
S_m(\tr(R))^2\, S_n(\tr(S))^2 \le 2 - \tr([A,B]) < 4.
$$
By Proposition~\ref{prop:MoreSnProps}.d, we have the inequalities $S_m(\tr(R))^2 > m^2 \ge 4$ for $m\ge 2$ and
$S_n(\tr(S))^2 > n^2 \ge 4$ for $n\ge 2$. Therefore we conclude that $m=n=1$.
\end{proof}

\bigskip
\noindent{\bf Case II (Hyperbolic Commutator, Elliptic Elements, Non-Elliptic Generators):}\\
For 2-generated non-elementary subgroups of $\PSLR$ with hyperbolic commutator and 
containing elliptic elements, we use the following classification theorem (cf.~\cite[Theorem~2]{Ros2}).

\begin{theorem}[2-Generated Fuchsian Groups with Elliptic Elements]\label{thm:TheoremC}\,\\
Let $A,B\in \PSLR$ be non-elliptic elements such that $\tr([A,B])>2$
and such that $\langle A,B\rangle$ is non-elementary and contains elliptic elements.
Then $\langle A,B\rangle$ is discrete if and only if there exists an extended Nielsen transformation
from $(A,B)$ to a pair $(U,V)$ such that (after a suitable choice of signs) the following 
conditions hold.
\begin{enumerate}
\item[(1)] $\tr(U)=2 \cos(\pi/p)$ for some $p\ge 2$.

\item[(2)] $\tr(V)=2 \cos(\pi/q)$ for some $q\ge 2$ or $\tr(V)\ge 2$.

\item[(3)] $\tr(UV)= - 2 \cos(\pi/r)$ for some $r\ge 2$ or $\tr(UV)\le -2$.
\end{enumerate}
\end{theorem}

For the case in which the Extended Trace Minimization Algorithm~\ref{alg:ExtTraceMin} returns
a trace triple $(x,y,z)$ with $0\le x\le y<2$, we make use of the
following classification theorem by A.W.\ Knapp (cf.~\cite[Theorem~2.3]{Kna}).

\begin{theorem}[2-Generated Fuchsian Groups with Elliptic Generators]\label{thm:Knapp}\, \\
Let $A,B\in \PSLR$ be elliptic elements such that $\tr(A) = 2\,\cos(\pi/p)$
with $p\ge 2$ and $\tr(B)=2\,\cos(\pi/q)$ with $q\ge p$. Then $\langle A,B\rangle$ is discrete
if and only if one of the following cases occurs.
\begin{enumerate}
\item[(a)] $\tr(AB)\le -2$

\item[(b)] $\tr(AB) = -2\, \cos(r\,\pi/s)$ for some $s\ge 2$ and $r\in \{1,\dots,s-1\}$
with $\gcd(r,s)=1$, such that one of the following conditions holds:
\begin{itemize}
\item[(i)] $r=1$

\item[(ii)] $p=q$, $r=2$, and $s\ge 3$ odd.

\item[(iii)] $p=2$, $q\ge 3$ odd, $r=2$, $s=q$.

\item[(iv)] $p=3$, $q\ge 7$, $\gcd(q,3)=1$, $r=3$, $s=q$.

\item[(v)] $p=q=s$, $r=4$, $s\ge 7$ odd.

\item[(vi)] $p=3$, $q=7$, $r=2$, $s=7$
\end{itemize}

\end{enumerate}
\end{theorem}

With these classifications in mind, we can now prove Case~II in Theorem~\ref{thm:torsion}
and Case III in Theorem~\ref{thm:TorsionElliptic}. We start by computing the trace triple $(\tr(R),\tr(S),\tr(RS))$
using Theorem~\ref{thm:CompTraceTriple}. 

\begin{proof}[{\bf Proof of Case II}]
First we note that $x=\tr(R)\ge 2$ and $y=\tr(S)\ge 2$ since~$A$ and~$B$ are parabolic or hyperbolic.
Therefore $S_m(x)\ne 0$, $S_n(y)\ne 0$, and
$$
\tr([R,S]) \;=\; 2 \;+\; \tfrac{\tr([A,B]) -2}{S_m(x)^2\, S_n(y)^2} > 2.
$$
Consequently, we can apply the Extended Trace Minimization Algorithm~\ref{alg:ExtTraceMin} to~$(x,y,z)$
and get $(x',y',z')$ with the properties listed in Theorem~\ref{thm:ExtTraceMin}.

First of all, we know that $x'=2\,\cos(\pi/p)$ for some $p\ge 2$.
Now, if $y'\ge 2$ then $z'\le -2$ and $\langle R,S\rangle$ is discrete by Theorem~\ref{thm:TheoremC}.
Further, if $0\le y'< 2$, we know that $y'=2\,\cos(\pi/q)$ with $q\ge p$.
In this case, if $z'\le -2$ then  $\langle R,S\rangle$ is again discrete by Theorem~\ref{thm:TheoremC}
or Theorem~\ref{thm:Knapp}.a.

Finally, we are left with the case $x'= 2\,\cos(\pi/p)$, where $p\ge 2$,
$y'= 2\,\cos(\pi/q)$, where $q\ge p$, and $z'= -2\,\cos(r\,\pi/s)$, where
$s\ge 2$, $1\le r\le s-1$, and $\gcd(r,s)=1$. In this case $\langle R,S\rangle$
is discrete if and only if one of the conditions (i) - (vi) in Theorem~\ref{thm:Knapp}.b holds.
Altogether, the proof of Case II is complete.
\end{proof}

\begin{proof}[{\bf Proof of Case III}]
To begin with, we claim that $\tr([R,S])>2$. If we had $-2<\tr([R,S])<2$, then
$\tr(R)>2$ and $\tr(S)>2$ by Proposition~\ref{prop:traceprops}.f, in contradiction
to $R^m=A$ and $\ord(A)<\infty$, as~$A$ is elliptic in a Fuchsian group $\langle A,B\rangle$.
Moreover, if $\tr([R,S])\le -2$ then $\langle R,S\rangle$ would be free by Theorem~\ref{thm:classify},
in contradiction to $\ord(A)<\infty$.

So, we have $\tr([R,S])>2$ and $\langle R,S\rangle$ contains an element of finite order.
Hence we can apply the Extended Trace Minimization Algorithm~\ref{alg:ExtTraceMin} to the trace triple of $(R,S)$.
By the discussion in the proof of Case~II, its output satisfies the stated conditions
if and only if $\langle R,S\rangle$ is discrete.
\end{proof}

The result of this discreteness check may depend on the choice of a number $\ell\in \{0,\dots,m-1\}$,
i.e., on the choice of the element~$R$ with $R^m=A$, respectively the choice of~$\tr(R)$,
as the following examples shows.

\begin{example}\label{ex:depends}
Consider the trace triple $(0,2,1)$ which corresponds for instance to the pair
$(A,B) \in \PSLR^2$ given by $A= \left( \begin{smallmatrix} 0 & 1\\ -1 & 0\end{smallmatrix}\right)$
and $B= \left( \begin{smallmatrix} 1 & 0 \\ 1 & 1 \end{smallmatrix} \right)$. We choose $m=3$ and $n=1$,
so $S=B$ and $\tr(S)=2$. For $\tr(R)$, there are three possible values according to 
Theorem~\ref{thm:CompTraceTriple}.c, depending on the choice of $\ell\in \{0,1,2\}$.

\smallskip
\noindent{\bf Case $\ell=0$:} Here the matrix~$R$ satisfies $\tr(R) = 2\, \cos(\pi/6) = \sqrt{3}$,
i.e., $R$ is a normalized elliptic element of order~6.
We calculate $\tr(RS) = \frac{1}{2} + \sqrt{3} \approx 2.232$ and $\tr([R,S]) =  \frac{9}{4} = 2.25$.
Thus we are in the setting of the Extended Trace Minimization Algorithm~\ref{alg:ExtTraceMin}.
Let us follow its steps.
\begin{align*}
(\sqrt{3},\;2,\; &\tfrac{1}{2} + \sqrt{3}) \;\longrightarrow\; (\sqrt{3},\; 2,\; -\tfrac{1}{2} + \sqrt{3})
\;\longrightarrow\; ( - \tfrac{1}{2} + \sqrt{3}, \; \sqrt{3},\; 2)\\ 
&\;\longrightarrow\;
( - \tfrac{1}{2} + \sqrt{3},\; \sqrt{3},\; 1 - \tfrac{1}{2}\,\sqrt{3}) \;\longrightarrow \;
( 1 - \tfrac{1}{2}\,\sqrt{3},\;  - \tfrac{1}{2} + \sqrt{3},\; \sqrt{3})\\
&\;\longrightarrow\;  (1 - \tfrac{1}{2}\,\sqrt{3},\;  - \tfrac{1}{2} + \sqrt{3},\; -2 + \tfrac{1}{4}\, \sqrt{3})
\end{align*}

Next we have to apply the Rational Angle Recovery Algorithm~\ref{alg:RatAngle} to
$x' = 1 - \frac{1}{2}\,\sqrt{3}$. The minimal polynomial of~$x'$ is $P(t)=t^2-2t + \frac{1}{4}$.
The numbers~$q_i$ such that $\Phi(2q_i) = 2\,\deg(P(t)) = 4$ are $q_i\in \{5,8,10,12\}$, 
but $P(t)$ does not divide $S_{q_i}(t)$ for any of them. Hence the algorithms returns $(0,0)$,
and the group $\langle R,S\rangle$ is not discrete.

\smallskip
\noindent{\bf Case $\ell=1$:} In this case, the matrix $\widetilde{R}$ satisfies 
$\tr(\widetilde{R})=2\, \cos (3\,\pi/6) = 0$. So, $\widetilde{R}$ is elliptic of order~2,
and $\tr(\widetilde{R}S)= 1$ implies that $\widetilde{R}S$ is elliptic of order~3.
Here we get $\tr([\widetilde{R},S]) = 3$ and can apply the Extended Trace Minimization Algorithm~\ref{alg:ExtTraceMin}
again. We start with the usual Trace Minimizing Algorithm~\ref{alg:TraceMin} and compute
$$
(0,2,1) \;\longrightarrow\; (0,1,2) \;\longrightarrow\; (0,1,-2)
$$
Here the first two traces are already normalized, and the third one implies that
$\langle \widetilde{R},S\rangle$ is discrete by Case~II.b. In fact, this group is conjugated to the
modular group. 

Altogether, we find that $\ell=0$ leads to a non-discrete group and $\ell=1$ leads to a discrete group.
\end{example}

It remains to consider the case when $\langle A,B\rangle$ is a free Fuchsian group.
We have already characterized the groups $\langle R,S\rangle$ which are free Fuchsian groups
of rank~2 in Theorem~\ref{thm:free}. Thus all is left is to characterize when $\langle R,S\rangle$
is discrete and has torsion elements.

\begin{proof}[{\bf Proof of Case IV}]
Of course, if the result of the Trace Minimization Algorithm~\ref{alg:TraceMin} tells us via Theorem~\ref{thm:free}
that $\langle R,S\rangle$ is a free Fuchsian group of rank~2, we are done. So, let us now assume
that $\langle R,S\rangle$ contains elliptic elements. We compute the trace triple of $(R,S)$ and
$\tau=\tr([R,S])$. If $-2<\tau<2$ then we can apply the classification of 2-generated Fuchsian groups
with elliptic commutator (see Theorem~\ref{thm:EllipticComm}) to the group $\langle R,S\rangle$
and get part~(b) of Theorem~\ref{thm:CaseIV}.

The remaining possibility is $\tau>2$. As the group $\langle R,S\rangle$ contains an elliptic element, 
it can be discrete only if this elliptic element has a finite order. Hence we are in a position to
apply the Extended Trace Minimization Algorithm~\ref{alg:ExtTraceMin}, and its result allows us
to check whether $\langle R,S\rangle$ is discrete in the same way as in Case~II.
\end{proof}

%
%

\section{The Rational Power Algorithm}
\label{sec6}

As mentioned before, J.~Gilman's paper~\cite{G3} contains several geometric conditions 
for the result of adjoining roots to a Fuchsian group, chiefly in
the cases of Theorem~\ref{thm:free}.
In addition, she addressed also the following slightly more general question.

Let $A,B \in \PSLR$ be elements which generate a Fuchsian group $\langle A,B\rangle$. 
Let $\frac{p}{q},\, \frac{p'}{q'}\in \mathbb{Q}_+$ with $p,q,p',q' \in \mathbb{N}_+$
and $\gcd(p,q)=\gcd(p',q')=1$. Let $R,S \in \PSLR$ be elements such that
$A=R^{\,p/q}$ and $B=S^{\,p'/q'}$. Under which conditions is the group $\langle R,S\rangle$
again a Fuchsian group? If $\langle A,B \rangle$ is a free Fuchsian group of rank~2,
under which conditions is $\langle R,S\rangle$ again a free Fuchsian group of ranke~2?

These questions can be answered in a straightforward way using the methods developed here.
Clearly, the elements $\widetilde{A}=A^q$ and~$\widetilde{B}=B^{\, q'}$
generate a Fuchsian group, and if $\langle A,B\rangle$ is free of rank~2, then $\langle A^q, B^{\,q'}\rangle$
is free of rank~2, as well. The trace triple $(\tr(A^q), \tr(B^{\,q'}), \tr(A^q B^{\,q'}))$ can be calculated
using Proposition~\ref{prop:power}, and $\tr([A^q,B^{\,q'}])$ is then obtained from
Proposition~\ref{prop:traceprops}.d.
Now we can apply Theorems~\ref{thm:free}, \ref{thm:torsion}, \ref{thm:TorsionElliptic},
and~\ref{thm:CaseIV} to $R^p=\widetilde{A}$ and $S^{p'}=\widetilde{B}$.

\begin{theorem}\label{thm:RatPowerFree}
Let $A,B \in \PSLR$ be elements which generate a free Fuchsian group of rank~2, 
and let $\frac{p}{q},\frac{p'}{q'} \in \mathbb{Q}_+$ with $p,q,p',q' \in \mathbb{N}_+$ and 
$\gcd(p,q)=\gcd(p',q')=1$. Let $R,S\in \PSLR$ such that $R^{\, p/q}=A$ and $S^{\, p'/q'}=B$.
\begin{enumerate}
\item[(1)] Compute the trace triple of $\widetilde{A}=A^q$ and $\widetilde{B}=B^{\,q'}$ 
using Proposition~\ref{prop:power}, and calculate $\tau = \tr([\widetilde{A},\widetilde{B}])$.

\item[(2)] Compute the trace triple $(x,y,z) = (\tr(R),\tr(S),\tr(RS))$ of~$(R,S)$
using $R^{\, p} = \widetilde{A}$, $S^{\,p'}=\widetilde{B}$, and Theorem~\ref{thm:CompTraceTriple}.
\end{enumerate}
Then $\langle R,S\rangle$ is a free Fuchsian group of rank~2 if and only if one of the 
following two conditions hold:

\smallskip
\noindent{\rm\bf Case F1:} $\tau\le -2$ and 
$S_m(x)^2 \cdot S_n(y)^2 \;\le\; \tfrac{1}{2} - \tfrac{1}{4}\, \tau$.

\smallskip
\noindent{\rm\bf Case F2:} $\tau > 2$  and the trace triple $(x',y',z')$
which results from applying Algorithm~\ref{alg:TraceMin} to $(x,y,z)$ 
satisfies $x'\ge 2$, $y'\ge 2$, and $z'\le -2$.
\end{theorem}

\begin{theorem}\label{thm:RatPower}
Let $A,B \in \PSLR$ be elements which generate a Fuchsian group, 
and let $\frac{p}{q},\, \frac{p'}{q'}\in\mathbb{Q}_+$  with $p,q,p',q' \in \mathbb{N}_+$ 
and $\gcd(p,q)=\gcd(p',q')=1$. Perform steps~(1) and~(2) of Theorem~\ref{thm:RatPowerFree}.

Then $\langle R,S\rangle$ is a Fuchsian group if and only if one of the following
cases occurs:

\smallskip
\noindent{\rm\bf Case I:} $-2<\tau<2$ and $p=p'=1$.

\smallskip
\noindent{\rm\bf Case II:} $\tau>2$, $\tr(A^q)>2$, $\tr(B^{\,q'})>2$, $\langle R,S\rangle$
is not free, and the triple $(x',y',z')$ obtained by applying Algorithm~\ref{alg:ExtTraceMin}
to $(x,y,z)$ satisfies one of the conditions in Case~II of Theorem~\ref{thm:torsion}.

\smallskip
\noindent{\rm\bf Case III:} $A^q$ (and possibly $B^{\,q'}$) is elliptic,
and the triple $(x',y',z')$ obtained by applying Algorithm~\ref{alg:ExtTraceMin}
to $(x,y,z)$ satisfies one of the conditions in Case~II of Theorem~\ref{thm:torsion}.

\smallskip
\noindent{\rm\bf Case IV.a:} $\langle R,S\rangle$ is a free Fuchsian group of rank~2
according to Theorem~\ref{thm:RatPowerFree}.

\smallskip
\noindent{\rm\bf Case IV.b:} $\tau'= \tr([R,S])$ satisfies $-2 < \tau' < 2$, and Algorithm~\ref{alg:RatAngle},
applied to $\vert \tau'\vert$, yields a pair $(\tilde{p},\tilde{q})\ne (0,0)$ which satisfies one of the six 
conditions in Theorem~\ref{thm:CaseIV}.b.

\smallskip
\noindent{\rm\bf Case IV.c:} $\tau'>2$, $\langle R,S\rangle$ is not free, and Algorithm~\ref{alg:ExtTraceMin},
applied to $(x,y,z)$, yields a triple $(x',y',z')$ which satisfies one of the conditions in Case~II
of Theorem~\ref{thm:torsion}.
\end{theorem}

The proof of these theorems follows immediately by combining the calculation of the trace triple
of $(A^q,B^{\,q'})$ with the theorems in the introduction.

\bigskip
\noindent{\bf Acknowledgements.} The authors are very grateful to the referees of the original version
of this paper for their helpful comments which induced a major overhaul and many substantial improvements.

%
%

\end{document}